\newtheorem{theorem}{Theorem}
\newtheorem{definition}{Definition}
\newtheorem{proposition}{Proposition}
\newtheorem{lemma}{Lemma}
\title{\textbf{A Category of Ordered Algebras Equivalent to the Category of Multialgebras}}
\author{Coniglio, Marcelo E.\thanks{coniglio@unicamp.br} }
\affil{Institute of Philosophy and the Humanities - IFCH and\\
Centre for Logic, Epistemology and The History of Science - CLE\\
University of Campinas - Unicamp\\
Campinas, SP, Brazil}
\author{Toledo, Guilherme V.\thanks{guivtoledo@gmail.com}}
\affil{Department of Computer Science\\
The Spiegel Mathematics \& Computer Center\\
Bar Ilan University\\
Ramat Gan, Tel Aviv, Israel}
\date{\vspace{-5ex}}
\providecommand{\keywords}[1]{\textbf{\textit{Keywords:}} #1}
\begin{document}

\setcounter{page}{1}     

\maketitle

\begin{abstract}
		It is well known that there is a correspondence between sets and complete, atomic Boolean algebras (\textit{CABA}'s) taking a set to its power-set and, reciprocally, a complete, atomic Boolean algebra to its set of atomic elements. Of course, such a correspondence induces an equivalence between the opposite category of $\textbf{Set}$ and the category of \textit{CABA}'s.

We extend this result by taking multialgebras over a signature $\Sigma$, specifically those whose non-deterministic operations cannot return the empty-set, to \textit{CABA}'s with their zero element removed and a structure of $\Sigma$-algebra compatible with its order; reciprocally, one of these ``almost Boolean'' $\Sigma$-algebras is taken to its set of atomic elements equipped with a structure of multialgebra over $\Sigma$. This leads to an equivalence between the category of $\Sigma$-multialgebras and a category of ordered $\Sigma$-algebras. 

The intuition, here, is that if one wishes to do so, non-determinism may be replaced by a sufficiently rich ordering of the underlying structures.
\end{abstract}

\keywords{Multialgebras, ordered algebras, non-deterministic semantics.}

\section*{Introduction}

It is a seminal result (see \cite{Oosten} for a proof) that a correlation between sets and complete, atomic Boolean algebras (\textit{CABA}'s) exists: a set is taken to its power-set, while a \textit{CABA} is taken to its set of atomic elements. These two assignments can be made into functors, giving rise to an equivalence of $\textbf{Set}^{op}$ and $\textbf{CABA}$, the category with \textit{CABA}'s as objects.

This is part of a broader area of study, known as Stone dualities, which studies relationships between posets and topological spaces and was established by Stone (\cite{Stone}) and his representation theorem, which states that every Boolean algebra is isomorphic to a field of sets, specifically the algebra of clopen sets of its Stone space (a topological space where points are ultrafilters of the original Boolean algebra). Of course, this corresponds to an equivalence between the category $\textbf{BA}$ of Boolean algebras and that of Stone spaces.

In the search of dualities with categories of posets, we focus on a more concrete equivalence closely related to the one between $\textbf{Set}^{op}$ and $\textbf{CABA}$. We arrive at a category of multialgebras (originally developed in \cite{Marty}) over a signature $\Sigma$ by adding further structure to $\textbf{Set}$; correspondingly, we replace $\textbf{CABA}$ by a category of \textit{CABA}'s equipped with $\Sigma$-operations compatible with its order.

However, we choose to stress slightly distinct categories, although results as per the aforementioned terms do exist. We are most interested in non-partial multialgebras, where the result of an operation never returns the empty set. Consequently, we exchange \textit{CABA}'s by posets corresponding to power-sets with the empty-set removed (that is, \textit{CABA}'s without minimum elements). This way, a multialgebra, with universe $A$, is taken to an algebra over the set of non-empty subsets of $A$, with order given by inclusion and operations given by ``accumulating'' the operations of the multialgebra, while reciprocally, an ``almost Boolean'' algebra is taken to its set of atomic elements, transformed into a multialgebra.

In the area of research of non-deterministic semantics (\cite{AvronLev}), specially paraconsistent logics (\cite{ParLog}), this offers an alternative: many logicians are reluctant to appeal to multialgebras in order to characterze a given logic, and the equivalence we here present shows one can, if one chooses to, replace such non-deterministic structures for more classically-behaved algebras, with an added underlying order.

In the first section, we give the definition of multialgebras we will use and introduce a brief characterization of power-sets without the empty-set. In the second section, we introduce a naive approach to what we would like to accomplish, and show why it fails. In the third section, we introduce the categories for which our desired result actually holds and the functors that will establish an equivalence between them, equivalence we detail in section four. The final section is reserved for related results.

A preliminary version of this paper can be found in the PhD thesis \cite{Thesis}.

\section{Preliminary notions}

A signature is a collection $\Sigma=\{\Sigma_{n}\}_{n\in \mathbb{N}}$ of possibly empty, disjoint sets indexed by the natural numbers; when there is no risk of confusion, the union $\bigcup_{n\in\mathbb{N}}\Sigma_{n}$ will also be denoted by $\Sigma$.

A $\Sigma-$multialgebra (also known as multialgebra) is a pair $\mathcal{A}=(A, \{\sigma_{\mathcal{A}}\}_{\sigma\in\Sigma})$ such that: $A$ is a non-empty set and, for $\sigma\in\Sigma_{n}$, $\sigma_{\mathcal{A}}$ is a function of the form
\[ \sigma_{\mathcal{A}}:A^{n}\rightarrow\mathcal{P}(A)\setminus\{\emptyset\},\]
where $\mathcal{P}(A)$ denotes the power-set of $A$.

A homomorphism between $\Sigma-$multialgebras $\mathcal{A}=(A, \{\sigma_{\mathcal{A}}\}_{\sigma\in\Sigma})$ and $\mathcal{B}=(B, \{\sigma_{\mathcal{B}}\}_{\sigma\in\Sigma})$ is a function $h:A\rightarrow B$  satisfying, for any $n\in\mathbb{N}$, $\sigma\in\Sigma_{n}$ and $a_{1}, \ldots  , a_{n}\in A$,
\[ \{h(a):  a\in \sigma_{\mathcal{A}}(a_{1}, \ldots  , a_{n})\}\subseteq \sigma_{\mathcal{B}}(h(a_{1}), \ldots  , h(a_{n})).\]
If the inclusion, in the previous equation, were to be replaced by an equality, the resulting $h$ would be a full homomorphism; and a bijective full homomorphism is called an isomorphism. Whenever $h$ is a homomorphism from $\mathcal{A}$ to $\mathcal{B}$, we write $h:\mathcal{A}\rightarrow \mathcal{B}$.

\subsection{Complete, atomic and bottomless Boolean algebras}

Here, we will understand Boolean algebras mostly as partially-ordered sets (\textit{poset}). A pair $(A, \leq)$ is a Boolean algebra if: $\leq$ is reflexive, anti-symmetric and transitive; there are a maximum (denoted by $1$) and a minimum ($0$), which we shall assume distinct; for every pair of elements $(a, b)\in A^{2}$, the set $\{a, b\}$ has a supremum, denoted by $\sup\{a, b\}$ or $a\vee b$, and an infimum, denoted by $\inf\{a, b\}$ or $a\wedge b$; and every element $a$ has a complement $b$ which satisfies
\[ b=\inf\{c\in A:  \sup\{a, c\}=1\}\]
and
\[ b=\sup\{c\in A:  \inf\{a, c\}=0\}.\]

A Boolean algebra $(A, \leq)$ is said to be complete if every $S\subseteq A$ has a supremum. 

\begin{lemma}
\begin{enumerate}
\item Every Boolean algebra $(A, \leq)$ is distributive, meaning $a\vee (b\wedge c)=(a\vee b)\wedge(a\vee c)$ and $a\wedge(b\vee c)=(a\wedge b)\vee(a\wedge c)$ for any $a, b, c\in A$;

\item every complete Boolean algebra $(A, \leq)$ is infinite distributive, meaning that for any $S\cup\{a\}\subseteq A$,\\ $\sup\{\inf\{a, s\}: s\in S\}=\inf\{a, \sup S\}$ and $\inf\{\sup\{a, s\}: s\in S\}=\sup\{a, \inf S\}$.
\end{enumerate}
\end{lemma}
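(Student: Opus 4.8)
The plan is to prove both parts by reducing to standard Boolean-algebra manipulations, using only the poset-theoretic definition given above. For part (1), I would first establish the two absorption-type facts that follow immediately from the lattice structure, and then prove one distributive law, say $a\wedge(b\vee c)=(a\wedge b)\vee(a\wedge c)$; the other follows by the dual argument (swapping $\leq$ for $\geq$, which swaps $\vee$ and $\wedge$ and interchanges $0$ and $1$, so that the axioms are preserved). One inequality, $(a\wedge b)\vee(a\wedge c)\leq a\wedge(b\vee c)$, holds in any lattice: $a\wedge b\leq a$ and $a\wedge b\leq b\leq b\vee c$, so $a\wedge b\leq a\wedge(b\vee c)$, and symmetrically for $a\wedge c$, whence the supremum of the two is below $a\wedge(b\vee c)$. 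The reverse inequality is the part that genuinely uses complementation: writing $d=(a\wedge b)\vee(a\wedge c)$, I would show $a\wedge(b\vee c)\leq d$ by exhibiting that $a\wedge(b\vee c)\wedge d'=0$, where $d'$ is the complement of $d$; since in a Boolean algebra $x\leq y$ iff $x\wedge y'=0$, this suffices. Computing $a\wedge(b\vee c)\wedge d'$ requires knowing $d'\wedge(a\wedge b)=0$ and $d'\wedge(a\wedge c)=0$ (because $a\wedge b\leq d$ and $a\wedge c\leq d$), and then one distributes $a\wedge b\wedge d'\vee a\wedge c\wedge d' = a\wedge d'\wedge(b\vee c)$ — but that is circular, so instead I would argue more carefully, perhaps via the uniqueness of complements and the observation that it suffices to check the identity on the finite Boolean subalgebra generated by $a,b,c$; alternatively, cite that the equational theory of Boolean algebras follows from the given order-theoretic axioms, as is classical.

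For part (2), the infinitary distributive laws, I would again do one of the two, say $\sup\{\inf\{a,s\}:s\in S\}=\inf\{a,\sup S\}$, the other being dual. The easy inequality $\sup\{a\wedge s:s\in S\}\leq a\wedge\sup S$ holds because each $a\wedge s\leq a$ and $a\wedge s\leq s\leq\sup S$, so each $a\wedge s$ is a lower bound situation giving $a\wedge s\leq a\wedge\sup S$, and then $a\wedge\sup S$ is an upper bound of $\{a\wedge s:s\in S\}$, hence above their supremum (which exists by completeness). For the reverse inequality, let $t=\sup\{a\wedge s:s\in S\}$; I want $a\wedge\sup S\leq t$. The standard trick: for each $s\in S$ we have $a\wedge s\leq t$, hence $s\leq t\vee a'$ (using $a\wedge s\leq t$ $\Rightarrow$ $s\wedge a\wedge (t')=0$... ) — more cleanly, from $a\wedge s\leq t$ one gets $s = (a\wedge s)\vee(a'\wedge s)\leq t\vee a'$ by the \emph{finite} distributive law from part (1). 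Therefore $t\vee a'$ is an upper bound of $S$, so $\sup S\leq t\vee a'$, and then $a\wedge\sup S\leq a\wedge(t\vee a')=(a\wedge t)\vee(a\wedge a')=(a\wedge t)\vee 0 = a\wedge t\leq t$, again invoking finite distributivity. This chains part (1) into part (2).

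I expect the main obstacle to be the reverse inequality in the finite distributive law of part (1), since it is the only place where complementation is essentially used and where a naive distribution argument becomes circular; everything after that is bookkeeping. The cleanest honest route is probably to prove the auxiliary De Morgan / complement-uniqueness lemmas first (that complements are unique, that $x\leq y\iff x\wedge y'=0 \iff x\vee y'=1$, and the De Morgan laws $(x\vee y)'=x'\wedge y'$), then deduce finite distributivity, then bootstrap to the infinitary version as above. Alternatively, if the paper is content to, one may simply remark that the poset axioms given are equivalent to the usual equational presentation of Boolean algebras and cite a standard reference for the distributive laws, doing in detail only the infinitary part (2), which is the genuinely new content relative to textbook treatments. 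I would present part (2) in full and either prove or cite part (1).
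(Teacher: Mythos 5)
The paper states this lemma without any proof at all --- it is presented as a classical fact about Boolean algebras (the reader is implicitly referred to standard references such as \cite{Oosten}), so there is no argument of the authors' to compare yours against. Measured on its own terms, your plan is sound where it is concrete: the derivation of the infinitary law in part (2) from the finite law in part (1) --- decompose $s=(a\wedge s)\vee(a'\wedge s)$, conclude $s\leq t\vee a'$ for $t=\sup\{a\wedge s:s\in S\}$, hence $\sup S\leq t\vee a'$, and finish with $a\wedge\sup S\leq a\wedge(t\vee a')=a\wedge t\leq t$ --- is the standard correct argument, and you are right that the other infinitary law and the easy inequality are routine. (Two small points you could make explicit: the case $S=\emptyset$, where both sides are $0$; and the fact that completeness as defined, via suprema only, yields arbitrary infima as suprema of lower bounds, which is needed for $\inf S$ in the dual law.)

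The one genuine soft spot is part (1), which you candidly leave to either a citation or a ``check it on the finite Boolean subalgebra generated by $a,b,c$'' argument. The latter route is itself circular: a complemented bounded lattice need not be distributive (e.g.\ $M_3$), and the claim that the sublattice generated by three elements is a \emph{finite Boolean} algebra already presupposes distributivity. What actually has to be verified here is that the paper's particular complementation axiom --- the complement of $a$ is simultaneously $\inf\{c:\sup\{a,c\}=1\}$ and $\sup\{c:\inf\{a,c\}=0\}$ --- is strong enough to force distributivity (it does rule out $M_3$, for instance, since there the two descriptions give $0$ and $1$ respectively); this is where the real work lies, and neither your sketch nor the paper carries it out. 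Since the paper is content to assert the lemma as classical, falling back on a citation for part (1) and proving part (2) in full, as you propose, is a perfectly acceptable and indeed slightly more complete treatment than the paper's own.
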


An element $a$ of a Boolean algebra is said to be an atom if it is minimal in the underlying order of the algebra when restricted to $A\setminus\{0\}$, meaning that if $b\leq a$, then either $b=0$ or $b=a$. The set of all the atoms $d$ such that $d \leq a$ will be  denoted by $A_{a}$. Finally, a Boolean algebra is said to be atomic if, for every one of its elements $a$, $a=\sup A_{a}$.

Notice that complete, atomic Boolean algebras are power-sets. Essentially, if one takes for a Boolean algebra $\mathcal{A}=(A, \leq)$ the set $A_{1}$ of all of its atoms, one sees $\mathcal{A}$ is isomorphic to $\mathcal{P}(A_{1})$, the power-set of $A_{1}$, being an arbitrary element $a\in A\setminus\{0\}$ taken, by this isomorphism, to $A_{a}$, while $0$ is taken to $\emptyset$. Reciprocally, one associates to a set $X$ its obvious corresponding complete, atomic Boolean algebra $\mathcal{P}(X)$. For more information, look at Theorem $2.4$ of \cite{Oosten}.

We would like to work with Boolean algebras that are, simultaneously, complete, atomic and bottomless, meaning it lacks a bottom element: this seems a contradiction, given we assume Boolean algebras to have bottom elements, but this can be adequately formalized.

\begin{definition}
Given a non-empty partially ordered set $\mathcal{A}=(A, \leq_{\mathcal{A}})$, we define $\mathcal{A}_{0}$ as the partially ordered set
\[ (A\cup\{0\}, \leq_{\mathcal{A}_{0}}),\]
where we assume $0\notin A$, such that $a\leq_{\mathcal{A}_{0}} b$ if and only if:

\begin{enumerate}
\item either $a\leq_{\mathcal{A}} b$;

\item or $a=0$.
\end{enumerate}
\end{definition}

\begin{definition}
The non-empty partially ordered set $\mathcal{A}$ is a \textit{complete, atomic and bottomless Boolean algebra} whenever $\mathcal{A}_{0}$ is a complete, atomic Boolean algebra.
\end{definition}

Notice that, since $\mathcal{P}(\emptyset)$ only has $\emptyset$ as element, for any complete, atomic and bottomless Boolean algebra $\mathcal{A}$ we cannot have $\mathcal{A}_{0}=\mathcal{P}(\emptyset)$, given $\mathcal{A}$ has at least one element and therefore $\mathcal{A}_{0}$ must have at least two. This means complete, atomic and bottomless Boolean algebras correspond to the powerset of non-empty sets with $\emptyset$ removed.

\begin{proposition}\label{A0 is poset}
If $\mathcal{A}$ is a partially ordered set, so is $\mathcal{A}_{0}$.
\end{proposition}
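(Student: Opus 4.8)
The plan is to verify directly the three defining properties of a partial order for the relation $\leq_{\mathcal{A}_{0}}$ on $A\cup\{0\}$, deriving each from the corresponding property of $\leq_{\mathcal{A}}$ together with the single structural fact that $0\notin A$. That last fact is what does all the work: it means $0$ can never occur on the right-hand side of an instance of $\leq_{\mathcal{A}}$, so whenever $x\leq_{\mathcal{A}_{0}}y$ holds with $y=0$ it must hold via clause (2), i.e.\ $x=0$.

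Reflexivity is immediate: for $x\in A\cup\{0\}$, either $x=0$ and clause (2) gives $x\leq_{\mathcal{A}_{0}}x$, or $x\in A$ and reflexivity of $\leq_{\mathcal{A}}$ together with clause (1) gives $x\leq_{\mathcal{A}_{0}}x$. For antisymmetry, assume $x\leq_{\mathcal{A}_{0}}y$ and $y\leq_{\mathcal{A}_{0}}x$. By the observation above, $y=0$ forces $x=0$ and symmetrically $x=0$ forces $y=0$, so either both are $0$ (and we are done) or both lie in $A$; in the latter case neither is $0$, so both inequalities come from clause (1), giving $x\leq_{\mathcal{A}}y$ and $y\leq_{\mathcal{A}}x$, hence $x=y$ by antisymmetry of $\leq_{\mathcal{A}}$.

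For transitivity, assume $x\leq_{\mathcal{A}_{0}}y$ and $y\leq_{\mathcal{A}_{0}}z$. If $x=0$, clause (2) immediately yields $x\leq_{\mathcal{A}_{0}}z$. Otherwise $x\in A$, so $x\leq_{\mathcal{A}_{0}}y$ must be an instance of clause (1), i.e.\ $x\leq_{\mathcal{A}}y$; in particular $y\in A$, so $y\neq 0$, and then $y\leq_{\mathcal{A}_{0}}z$ is likewise an instance of clause (1), i.e.\ $y\leq_{\mathcal{A}}z$ with $z\in A$. Transitivity of $\leq_{\mathcal{A}}$ gives $x\leq_{\mathcal{A}}z$, hence $x\leq_{\mathcal{A}_{0}}z$. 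There is no real obstacle here; the only point needing a moment's care is precisely the handling of $0$ on the right-hand side, which is what pins down the case splits in antisymmetry and transitivity, and one should also simply remark that $A\cup\{0\}$ is non-empty so that $\mathcal{A}_{0}$ is a genuine (non-empty) poset as needed for the definitions that follow.
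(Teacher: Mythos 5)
Your verification is correct and complete; the key point you isolate — that since $0\notin A$, any instance of $x\leq_{\mathcal{A}_{0}}0$ must come from clause (2), forcing $x=0$ — is exactly what makes antisymmetry and transitivity go through. The paper states this proposition without proof, treating it as routine, and your direct check of reflexivity, antisymmetry, and transitivity is precisely the argument the authors leave implicit.
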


\begin{lemma}\label{sup of low bounds is low bound}
Given a partially ordered set $(A, \leq)$, for elements $a, b\in A$ we have that the supremum of the lower bounds of $\{a,b\}$, if it exists, is itself a lower bound for $\{a,b\}$.
\end{lemma}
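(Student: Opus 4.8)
The plan is to unwind the definitions of ``lower bound'' and ``supremum''. Write $L=\{c\in A: c\leq a \text{ and } c\leq b\}$ for the set of lower bounds of $\{a,b\}$, and suppose that $s=\sup L$ exists in $(A,\leq)$; the goal is to show $s\in L$, that is, $s\leq a$ and $s\leq b$.

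The key observation is that $a$ is an upper bound of $L$: indeed, for every $c\in L$ we have $c\leq a$ directly from the definition of $L$. Likewise $b$ is an upper bound of $L$. Since $s=\sup L$ is by definition the \emph{least} upper bound of $L$, it follows that $s\leq a$ and $s\leq b$, which is exactly the assertion that $s$ is a lower bound of $\{a,b\}$, i.e.\ $s\in L$.

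There is essentially no obstacle here; the only point deserving a remark is the edge case $L=\emptyset$ (which arises when $\{a,b\}$ has no common lower bound). In that situation $\sup L$ exists only if $(A,\leq)$ has a minimum element $0$, and then $0\leq a$ and $0\leq b$ hold vacuously-by-minimality, so $0=\sup\emptyset$ is again a lower bound of $\{a,b\}$; when $(A,\leq)$ has no minimum, $\sup L$ does not exist and the hypothesis ``if it exists'' is not met, so nothing is claimed. Hence the statement holds in all cases, and the proof is a direct application of the least-upper-bound property.
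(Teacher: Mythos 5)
Your proof is correct and is essentially the paper's own argument: observe that $a$ and $b$ are upper bounds of the set of lower bounds of $\{a,b\}$, so the supremum, being the least upper bound, satisfies $s\leq a$ and $s\leq b$. The extra remark about the empty set of lower bounds is a harmless (and accurate) addition not present in the paper.
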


\begin{proof}
Let $s$ be he supremum of the lower bounds of $\{a, b\}$: by definition, this means that for any upper bound $d$ for the set $\{c\in A:  c\leq a, c\leq b\}$ of lower bounds, $s\leq d$; but, since $a$ and $b$ are such upper bounds, we find that $s\leq a$ and $s\leq b$.
\end{proof}

\begin{theorem}\label{list}
A partially ordered set $(A, \leq)$ which satisfies all following conditions is a complete, atomic and bottomless Boolean algebra.

\begin{enumerate}
\item It has a maximum element $1$.

\item All non-empty subsets $S$ of $A$ have a supremum.

\item For every $a\in A$ different from $1$ there exists $b\in A$, named the complement of $a$, such that 
\[ b=\inf\{c\in A:  \sup\{a, c\}=1\}\]
and
\[ b=\sup\{c\in A:  \text{$\inf\{a, c\}$ does not exist}\},\]
property we call being semi-complemented.

\item Denoting by $A_{a}$ the set of minimal elements smaller than $a$, $a=\sup A_{a}$.
\end{enumerate}
\end{theorem}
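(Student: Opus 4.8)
The plan is to show that the four conditions imply that $\mathcal{A}_0$ is a complete, atomic Boolean algebra. By Proposition~\ref{A0 is poset} we already know $\mathcal{A}_0 = (A \cup \{0\}, \leq_{\mathcal{A}_0})$ is a poset, and by construction $0$ is its minimum while the maximum $1$ of $\mathcal{A}$ (condition~1) remains the maximum of $\mathcal{A}_0$, and $0 \neq 1$ since $1 \in A$ but $0 \notin A$. So the work splits into three tasks: (a) $\mathcal{A}_0$ is a complete lattice, i.e. every subset has a supremum (hence also an infimum); (b) $\mathcal{A}_0$ is complemented, with complements satisfying the two defining equations of a Boolean algebra; (c) $\mathcal{A}_0$ is atomic. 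Along the way I also need distributivity, but for the definition used in this paper (a Boolean algebra is a poset with max, min, binary sups and infs, and complements given by the two meet/join formulas) distributivity is not part of the axioms — it follows, by the first Lemma. So strictly I only need (a), (b), (c).

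For (a): given $S \subseteq A \cup \{0\}$, I would handle cases. If $S \setminus \{0\} = \emptyset$ then $\sup S = 0$. Otherwise let $S' = S \setminus \{0\} \subseteq A$, which is non-empty, so by condition~2 it has a supremum $s$ in $\mathcal{A}$; I then check $s$ is also the supremum of $S$ in $\mathcal{A}_0$, using that $0 \leq_{\mathcal{A}_0} x$ for all $x$, so adjoining $0$ to $S'$ changes nothing, and that upper bounds of $S$ in $\mathcal{A}_0$ that equal $0$ can only occur when $S' = \emptyset$. Infima come for free in a complete lattice with a top: $\inf S$ is the supremum of the set of lower bounds (which is non-empty since $0$ is always a lower bound, so condition~2 applies after throwing in $0$), and by Lemma~\ref{sup of low bounds is low bound} that supremum is genuinely a lower bound, hence the greatest one. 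In particular every pair $\{a,b\}$ has a supremum and an infimum in $\mathcal{A}_0$. Note that the binary infimum $a \wedge b$ taken in $\mathcal{A}_0$ is $0$ exactly when $\{a,b\}$ has no lower bound in $A$, i.e. exactly the ``$\inf\{a,c\}$ does not exist'' situation appearing in condition~3.

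For (b): I must produce, for each $x \in A \cup \{0\}$, a complement satisfying both the ``$\inf$ of complementary-to-$1$'' formula and the ``$\sup$ of disjoint'' formula, where now ``disjoint'' means $\inf\{x,c\} = 0$ in $\mathcal{A}_0$. The element $1$ has complement $0$ and $0$ has complement $1$; these are routine. For $a \in A$, $a \neq 1$, condition~3 hands me $b$ with $b = \inf\{c \in A : \sup\{a,c\} = 1\}$ and $b = \sup\{c \in A : \inf\{a,c\} \text{ does not exist}\}$. Here the key translation is: the supremum $\sup\{a,c\}$ computed in $\mathcal{A}$ equals $1$ iff the supremum computed in $\mathcal{A}_0$ equals $1$ (adding $0$ is harmless, and $1 \notin$ issue since $c=0$ gives $\sup\{a,0\}=a \ne 1$); and ``$\inf\{a,c\}$ does not exist in $A$'' is equivalent to ``$\inf\{a,c\} = 0$ in $\mathcal{A}_0$'' for $c \in A$, while for $c = 0$ we have $\inf\{a,0\} = 0$ trivially but $0$ contributes nothing to a supremum. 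I also need to double-check the infimum in the first formula: $\inf$ over $\{c \in A : \sup\{a,c\}=1\}$ taken in $\mathcal{A}$ versus in $\mathcal{A}_0$ — since this set is non-empty ($1$ is in it) these agree unless the $\mathcal{A}_0$-infimum drops to $0$, which I must argue does not happen (it equals $b \in A$ by condition~3). Assembling these, $b$ serves as the Boolean complement of $a$ in $\mathcal{A}_0$. The main obstacle is precisely this bookkeeping: being careful that every supremum/infimum referenced in the Boolean-algebra axioms, when recomputed in $\mathcal{A}_0$, either agrees with its $\mathcal{A}$-value or degenerates to $0$ in exactly the controlled way that condition~3 was designed to encode. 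This is where ``$\inf$ does not exist'' must be matched up cleanly with ``$\inf = 0$''.

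For (c): an atom of $\mathcal{A}_0$ is a minimal element of $(A \cup \{0\}) \setminus \{0\} = A$ under $\leq_{\mathcal{A}_0}$, which on $A$ coincides with $\leq_{\mathcal{A}}$; so the atoms of $\mathcal{A}_0$ are exactly the minimal elements of $(A, \leq)$, i.e. the set denoted $A_a$ in condition~4 is literally the set of atoms of $\mathcal{A}_0$ below $a$. For $a \in A$, condition~4 gives $a = \sup A_a$ in $\mathcal{A}$, which by part (a) is also the supremum in $\mathcal{A}_0$; and for $a = 0$, the set of atoms below $0$ is empty with supremum $0$. Hence $\mathcal{A}_0$ is atomic. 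Combining (a), (b), (c) with the top/bottom and $0 \neq 1$, $\mathcal{A}_0$ is a complete, atomic Boolean algebra, so by definition $\mathcal{A}$ is a complete, atomic and bottomless Boolean algebra, as claimed. $\qed$
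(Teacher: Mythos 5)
Your proposal is correct and follows essentially the same route as the paper: pass to $\mathcal{A}_{0}$, check each Boolean-algebra axiom by translating suprema and infima between $\mathcal{A}$ and $\mathcal{A}_{0}$ (using Lemma~\ref{sup of low bounds is low bound} to identify ``$\inf\{a,c\}$ does not exist in $\mathcal{A}$'' with ``$\inf\{a,c\}=0$ in $\mathcal{A}_{0}$''), and then verify completeness and atomicity. Your write-up is somewhat more careful about the bookkeeping (e.g.\ checking that the infimum in the first complement formula does not degenerate to $0$), but the decomposition and the key ideas coincide with the paper's proof.
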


\begin{proof}
Suppose that $\mathcal{A}=(A, \leq_{\mathcal{A}})$ is a partially ordered set satisfying the previous list of conditions. Since $\mathcal{A}$ is a partially ordered set, so is $\mathcal{A}_{0}$ from Proposition~\ref{A0 is poset}. The maximum $1$ of $\mathcal{A}$ remains a maximum in $\mathcal{A}_{0}$, while $0$ becomes a minimum. For non-zero elements $a$ and $b$, the supremum in $\mathcal{A}_{0}$ remains the same as in $\mathcal{A}$, while if $a=0$ or $b=0$ the supremum is simply the largest of the two. 

If $a$ or $b$ are equal to $0$, the infimum is $0$, while if $a, b\in A$ there are two cases to consider: if $\inf\{a, b\}$ was defined in $\mathcal{A}$, it remains the same in $\mathcal{A}_{0}$; if the infimum was not defined in $\mathcal{A}$, this means that there were no lower bounds for both $a$ and $b$, since otherwise we would have
\[ \inf\{a,b\}=\sup\{c\in A:  c\leq_{\mathcal{A}}a\quad\text{and}\quad c\leq_{\mathcal{A}}b\}\]
by Lemma \ref{sup of low bounds is low bound}, and therefore the infimum of both in $\mathcal{A}_{0}$ is $0$. Every element $a\in A\setminus\{1\}$ already has a complement $b$ in $\mathcal{A}$ such that $b=\inf\{c\in A:  \sup\{a, c\}=1\}$ and
\[ b=\sup\{c\in A:  \text{$\inf\{a, c\}$ does not exist}\};\]
of course the first equality keeps on holding in $\mathcal{A}_{0}$, while the second becomes, remembering that the non-defined infima in $\mathcal{A}$ become $0$ in $\mathcal{A}_{0}$,
\[ b=\sup\{c\in A:  \inf\{a, c\}=0\};\]
the complement of $1$ is clearly $0$ and vice-versa. This proves $\mathcal{A}_{0}$ is a Boolean algebra.

Since $\mathcal{A}$ is closed under suprema of non-empty sets and $\sup\emptyset=0$ in $\mathcal{A}_{0}$, it is clear that $\mathcal{A}_{0}$ is closed under any suprema. Clearly $\mathcal{A}_{0}$ remains atomic, since $\mathcal{A}$ is atomic, what finishes the proof that the previous list of conditions imply $\mathcal{A}$ is a complete, atomic and bottomless Boolean algebra.

\end{proof}

\begin{theorem}
The reciprocal of Theorem \ref{list} holds, meaning that complete, atomic and bottomless Boolean algebras satisfy the list of conditions found in \ref{list}.
\end{theorem}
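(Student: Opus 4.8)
The plan is to prove the converse of Theorem~\ref{list}: start from a complete, atomic and bottomless Boolean algebra $\mathcal{A}=(A,\leq_{\mathcal{A}})$, which by definition means $\mathcal{A}_{0}=(A\cup\{0\},\leq_{\mathcal{A}_{0}})$ is a genuine complete, atomic Boolean algebra, and verify each of the four conditions in the list in turn. Throughout I would freely use the fact, noted in the excerpt, that complete atomic Boolean algebras are (up to isomorphism) power-sets, so one may picture $\mathcal{A}_{0}$ as $\mathcal{P}(X)$ for a non-empty set $X$ and $A$ as $\mathcal{P}(X)\setminus\{\emptyset\}$; but I would phrase the actual argument order-theoretically so as not to invoke the representation more than needed.

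First, condition~(1): the maximum $1$ of $\mathcal{A}_{0}$ is not $0$ (since $\mathcal{A}$ is non-empty, $\mathcal{A}_{0}$ has at least two elements, and the minimum and maximum of a Boolean algebra are assumed distinct), hence $1\in A$ and it is clearly still the maximum of the sub-poset $\mathcal{A}$. Second, condition~(2): given a non-empty $S\subseteq A$, its supremum $s$ in $\mathcal{A}_{0}$ exists by completeness; since $S$ is non-empty there is some $a\in S$ with $a\leq_{\mathcal{A}_{0}} s$ and $a\neq 0$, so $s\neq 0$, hence $s\in A$, and because the order on $\mathcal{A}$ is just the restriction of $\leq_{\mathcal{A}_{0}}$, $s$ is also the supremum of $S$ in $\mathcal{A}$. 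Fourth, condition~(4): atomicity of $\mathcal{A}_{0}$ says every element equals the supremum of the atoms below it; the atoms of $\mathcal{A}_{0}$ are exactly the minimal elements of $A$ (an atom is by definition minimal in $(A\cup\{0\})\setminus\{0\}=A$), and the supremum computation transfers back to $\mathcal{A}$ by condition~(2) applied to the non-empty set $A_{a}$ (non-empty because $a\neq 0$ and $\mathcal{A}_0$ is atomic). So $a=\sup A_{a}$ in $\mathcal{A}$.

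The main work is condition~(3), the ``semi-complemented'' property, which is exactly the delicate point, since it is the reformulation that was engineered to survive the removal of $0$. Fix $a\in A$ with $a\neq 1$ and let $b$ be its Boolean complement in $\mathcal{A}_{0}$. Note $b\neq 0$: if $b=0$ then $a=a\vee 0=a\vee b=1$, contradiction; so $b\in A$. I must show the two displayed equalities hold \emph{in $\mathcal{A}$}. For the first, $b=\inf\{c\in A:\sup\{a,c\}=1\}$: in $\mathcal{A}_{0}$ we have $b=\inf\{c\in A\cup\{0\}:a\vee c=1\}$ (a standard Boolean identity, since $c\geq b \iff a\vee c=1$ for the complement $b$), and since $b$ itself is in the set and $b\neq 0$, removing the element $0$ from the index set does not change the infimum; moreover every $c$ in that set with $c\neq 0$ lies in $A$, and the infimum $b$ lies in $A$, so the inf computed in $\mathcal{A}$ agrees (any lower bound in $\mathcal{A}$ of a non-empty subset of $A$ is a lower bound in $\mathcal{A}_0$ and conversely since $0$ is below everything). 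For the second equality, I would show that for $c\in A$, ``$\inf\{a,c\}$ does not exist in $\mathcal{A}$'' is equivalent to ``$\inf_{\mathcal{A}_{0}}\{a,c\}=0$'': indeed if the $\mathcal{A}_{0}$-infimum is $0$ then there is no common lower bound of $a,c$ in $A$ (any such would be a non-zero lower bound in $\mathcal{A}_0$, forcing the infimum above $0$; here I use Lemma~\ref{sup of low bounds is low bound} together with completeness exactly as in the proof of Theorem~\ref{list} to see that existence of a common lower bound in $A$ would yield an $\mathcal{A}$-infimum), and conversely if the $\mathcal{A}_{0}$-infimum is some $d\neq 0$ then $d\in A$ is the $\mathcal{A}$-infimum. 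Hence $\{c\in A:\inf\{a,c\}\text{ does not exist}\}=\{c\in A:\inf_{\mathcal{A}_0}\{a,c\}=0\}=\{c\in A\cup\{0\}:a\wedge c=0\}\setminus\{0\}$, and the supremum of this set is, by the dual Boolean identity $c\leq b\iff a\wedge c=0$, equal to $b$ (again dropping $0$ from the index set is harmless for a supremum, and $b\in A$ so the sup is the same computed in $\mathcal{A}$ by condition~(2)). Having checked all four conditions, the proof concludes. I expect the bookkeeping about ``which poset is the sup/inf computed in'' to be the only real friction; the Boolean-algebra identities $c\geq b\iff a\vee c=1$ and $c\leq b\iff a\wedge c=0$ for the complement $b$ of $a$ are standard and I would cite or quickly recall them rather than reprove them.
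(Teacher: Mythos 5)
Your proposal is correct and follows essentially the same route as the paper: each of the four conditions is transferred from $\mathcal{A}_{0}$ back to $\mathcal{A}$, with the only delicate point being the equivalence, for $c\in A$, between ``$\inf\{a,c\}$ does not exist in $\mathcal{A}$'' and ``$\inf\{a,c\}=0$ in $\mathcal{A}_{0}$'', which you establish exactly as the paper does. Your write-up is merely more explicit about the bookkeeping (e.g.\ checking $b\neq 0$ and that suprema/infima of subsets of $A$ agree in the two posets), which the paper's terser proof leaves implicit.
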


\begin{proof}
Given a partially ordered set $\mathcal{A}$, suppose $\mathcal{A}_{0}$ is a complete, atomic Boolean algebra.
\begin{enumerate}
\item The maximum $1$ of $\mathcal{A}_{0}$ is still a maximum in $\mathcal{A}$.

\item The supremum of any non-empty set in $\mathcal{A}$ is just its supremum in $\mathcal{A}_{0}$.

\item Given any element $a\neq 1$, its complement $b$ in $\mathcal{A}_{0}$ ends up being also its complement in $\mathcal{A}$. Clearly 
\[ b=\inf\{c\in A:  \sup\{a,c\}=1\}.\]
Now, $\inf\{a,c\}$ does not exist in $\mathcal{A}$ if, and only if, $\inf\{a,c\}=0$ in $\mathcal{A}_{0}$: we already proved that if $\inf\{a,c\}$ does not exist in $\mathcal{A}$ then $\inf\{a,c\}=0$ in $\mathcal{A}_{0}$, remaining to show the reciprocal; if the infimum of $a$ and $c$ existed in $\mathcal{A}$, it would equal $0$ in $\mathcal{A}_{0}$ given the unicity of the infimum, contradicting that $0$ is not in $\mathcal{A}$. This way, we find that in $\mathcal{A}$
\[ b=\sup\{c\in A:  \text{$\inf\{a,c\}$ does not exist}\},\]
as required.

\item Clearly $\mathcal{A}_{0}$ being atomic implies $\mathcal{A}$ being atomic.
\end{enumerate}
\end{proof}

\begin{proposition}\label{sem-inf-dist}
If $(A, \leq_{\mathcal{A}})$ is a complete, atomic and bottomless Boolean algebra, for any $S\subseteq A$, if 
\[ S^{a}=\{s\in S: \text{$\inf\{a, s\}$ exists}\}\neq\emptyset,\]
then
\[ \sup\{\inf\{a, s\}: s\in S^{a}\}=\inf\{a, \sup S\};\]
if $S^{a}=\emptyset$, $\inf\{a, \sup S\}$ also does not exist.
\end{proposition}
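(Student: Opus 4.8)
The plan is to reduce everything to the associated complete, atomic Boolean algebra $\mathcal{A}_{0}$ and to exploit its infinite distributivity. First I would set up the dictionary between $\mathcal{A}$ and $\mathcal{A}_{0}$ that was already extracted in the proofs of Theorem~\ref{list} and its converse: for a non-empty $T\subseteq A$, the supremum of $T$ is computed identically in $\mathcal{A}$ and in $\mathcal{A}_{0}$; an infimum $\inf\{a,c\}$ that exists in $\mathcal{A}$ remains the infimum of $\{a,c\}$ in $\mathcal{A}_{0}$; and $\inf\{a,c\}$ fails to exist in $\mathcal{A}$ exactly when $\inf\{a,c\}=0$ in $\mathcal{A}_{0}$. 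Since $\sup S$ is referred to in the statement, we are tacitly assuming $S\neq\emptyset$, so $\sup S$ denotes one and the same element of $A$ whether read in $\mathcal{A}$ or in $\mathcal{A}_{0}$.

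Next I would apply the infinite distributivity of the complete Boolean algebra $\mathcal{A}_{0}$ (the second item of the first lemma), obtaining in $\mathcal{A}_{0}$
\[ \inf\{a,\sup S\}=\sup\{\inf\{a,s\}: s\in S\}.\]
Then I would split the index set $S$: for $s\in S^{a}$ the term $\inf\{a,s\}$ is precisely the element of $A$ given by the infimum taken in $\mathcal{A}$, while for $s\in S\setminus S^{a}$ the dictionary gives $\inf\{a,s\}=0$. Since $0$ is the bottom element of $\mathcal{A}_{0}$, these zero terms contribute nothing to the supremum, whence in $\mathcal{A}_{0}$
\[ \inf\{a,\sup S\}=\sup\big(\{\inf\{a,s\}: s\in S^{a}\}\cup\{0\}\big)=\sup\{\inf\{a,s\}: s\in S^{a}\},\]
where for $S^{a}=\emptyset$ the right-hand side is $\sup\{0\}=0$.

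Finally I would separate the two cases. If $S^{a}\neq\emptyset$, then $\{\inf\{a,s\}: s\in S^{a}\}$ is a non-empty subset of $A$, so its supremum lies in $A$ and is the same in $\mathcal{A}$ and in $\mathcal{A}_{0}$; hence $\inf\{a,\sup S\}$ equals an element of $A$ already in $\mathcal{A}_{0}$, which by the dictionary means that $\inf\{a,\sup S\}$ exists in $\mathcal{A}$ and there equals $\sup\{\inf\{a,s\}: s\in S^{a}\}$, as desired. If $S^{a}=\emptyset$, the displayed computation gives $\inf\{a,\sup S\}=0$ in $\mathcal{A}_{0}$; since $0\notin A$ and infima are unique, $\inf\{a,\sup S\}$ cannot exist in $\mathcal{A}$. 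The step I expect to require the most care is this translation back and forth between $\mathcal{A}$ and $\mathcal{A}_{0}$ — in particular justifying that discarding the zero terms is legitimate and that a supremum of a non-empty family of elements of $A$ stays in $A$ — which is exactly where being bottomless and the preservation of non-empty suprema are used.
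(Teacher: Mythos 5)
Your proposal is correct and follows essentially the same route as the paper: pass to $\mathcal{A}_{0}$, invoke its infinite distributivity, observe that the terms indexed by $S\setminus S^{a}$ are $0$ and may be discarded from the supremum, and translate back to $\mathcal{A}$ using the correspondence between non-existent infima in $\mathcal{A}$ and zero infima in $\mathcal{A}_{0}$. Your explicit justification that a non-empty supremum of elements of $A$ remains in $A$ (hence the infimum genuinely exists in $\mathcal{A}$ when $S^{a}\neq\emptyset$) is slightly more careful than the paper's statement of the same step, but it is not a different argument.
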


\begin{proof}
If $S^{a}=\emptyset$ this means that $\inf\{a,s\}=0$ for every $s\in S$ in $\mathcal{A}_{0}$, and therefore $\inf\{a, \sup S\}=0$, so that the same infimum no longer exists in $\mathcal{A}$.

If $S^{a}\neq\emptyset$, all infima and suprema in $\sup\{\inf\{a,s\}: s\in S^{a}\}$ and $\inf\{a, \sup S\}$ exist in $\mathcal{A}$ and are therefore equal to their counterparts in $\mathcal{A}_{0}$; given $\sup\{\inf\{a, s\}: s\in S^{a}\}=\sup\{\inf\{a, s\}: s\in S\}$ in $\mathcal{A}_{0}$, since $s\in S\setminus S^{a}$ implies $\inf\{a, s\}=0$, by the infinite-distributivity of $\mathcal{A}_{0}$ one proves the desired result.
\end{proof}

The lesson to be taken from this short exposition is that a complete, atomic and bottomless Boolean algebra is a power-set (of a non-empty set) with the empty-set removed. This will be important to us given our multialgebras cannot return the empty-set as the result of an operation.

\section{A first attempt}\label{A first attempt}

Consider the categories $\textbf{Alg}(\Sigma)$ of $\Sigma$-algebras, with homomorphisms between $\Sigma$-algebras as morphisms, and\\ $\textbf{MAlg}(\Sigma)$ of $\Sigma$-multialgebras, with homomorphisms between $\Sigma$-multialgebras as morphisms.

For simplicity, denote the set of non-empty subsets of $A$, $\mathcal{P}(A)\setminus\{\emptyset\}$, by $\mathcal{P}^{*}(A)$. For a $\Sigma$-multialgebra $\mathcal{A}=(A, \{\sigma_{\mathcal{A}}\}_{\sigma\in\Sigma})$, consider the $\Sigma$-algebra $\mathsf{P}(\mathcal{A})=(\mathcal{P}^{*}(A), \{\sigma_{\mathsf{P}(\mathcal{A})}\}_{\sigma\in\Sigma})$ where, for a $\sigma\in\Sigma_{n}$ and nonempty $A_{1}, \ldots  , A_{n}\subseteq A$,
\[ \sigma_{\mathsf{P}(\mathcal{A})}(A_{1}, \ldots  , A_{n})=\bigcup_{(a_{1}, \ldots  , a_{n})\in A_{1}\times\cdots\times A_{n}}\sigma_{\mathcal{A}}(a_{1}, \ldots  , a_{n}).\]
Again, for simplicity, we may write the previous equation as $\sigma_{\mathsf{P}(\mathcal{A})}(A_{1}, \ldots  , A_{n})=\bigcup\{\sigma_{\mathcal{A}}(a_{1}, \ldots  , a_{n}) : a_{i}\in A_{i}\}$. We also define, for $\mathcal{A}$ and $\mathcal{B}$ two $\Sigma$-multialgebras and a homomorphism $ h :\mathcal{A}\rightarrow\mathcal{B}$, the function $\mathsf{P}( h ):\mathsf{P}(A)\rightarrow\mathsf{P}(B)$ such that, for a $\emptyset\neq A'\subseteq A$, 
\[ \mathsf{P}( h )(A')=\{ h (a)\in B:  a\in A'\}.\]

One could hope that $\mathsf{P}( h )$ is actually a $\Sigma$-homomorphism, perhaps making of $\mathsf{P}$ a functor from $\textbf{MAlg}(\Sigma)$ to $\textbf{Alg}(\Sigma)$, but the following result shows this is usually not to be expected.

\begin{lemma}\label{P is anti-hom}
For $\mathcal{A}$ and $\mathcal{B}$ two $\Sigma$-multialgebras and $ h :\mathcal{A}\rightarrow\mathcal{B}$ a homomorphism, $\mathsf{P}( h )$ satisfies
\[ \mathsf{P}( h )(\sigma_{\mathsf{P}(\mathsf{A})}(A_{1}, \ldots  , A_{n}))\subseteq \sigma_{\mathsf{P}(\mathsf{B})}(\mathsf{P}( h )(A_{1}), \ldots  , \mathsf{P}( h )(A_{n}))\]
for all $\sigma\in\Sigma$ and nonempty $A_{1}, \ldots  , A_{n}\subseteq A$. If $ h $ is a full homomorphism, $\mathsf{P}( h )$ is a homomorphism.

\end{lemma}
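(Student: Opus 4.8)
The plan is to verify the two asserted properties of $\mathsf{P}(h)$ directly from the definitions, using that unions commute with direct images under any function.

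\textbf{The inclusion.} First I would fix $\sigma\in\Sigma_n$ and nonempty $A_1,\dots,A_n\subseteq A$, and unwind both sides. On the left, $\sigma_{\mathsf{P}(\mathcal{A})}(A_1,\dots,A_n)=\bigcup\{\sigma_{\mathcal{A}}(a_1,\dots,a_n):a_i\in A_i\}$, so applying $\mathsf{P}(h)$, which is the direct-image map, gives
\[ \mathsf{P}(h)\bigl(\sigma_{\mathsf{P}(\mathcal{A})}(A_1,\dots,A_n)\bigr)=\bigcup\{\{h(a):a\in\sigma_{\mathcal{A}}(a_1,\dots,a_n)\}:a_i\in A_i\}, \]
since direct image distributes over arbitrary unions. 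Now the homomorphism condition on $h$ says $\{h(a):a\in\sigma_{\mathcal{A}}(a_1,\dots,a_n)\}\subseteq\sigma_{\mathcal{B}}(h(a_1),\dots,h(a_n))$ for each tuple. On the right-hand side, $\mathsf{P}(h)(A_i)=\{h(a):a\in A_i\}$, so $\sigma_{\mathsf{P}(\mathcal{B})}(\mathsf{P}(h)(A_1),\dots,\mathsf{P}(h)(A_n))=\bigcup\{\sigma_{\mathcal{B}}(b_1,\dots,b_n):b_i\in\mathsf{P}(h)(A_i)\}$, and since each $b_i$ of that form is exactly some $h(a_i)$ with $a_i\in A_i$, every set $\sigma_{\mathcal{B}}(h(a_1),\dots,h(a_n))$ appearing in the first union is one of the terms of this second union. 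Hence the left-hand union is contained in the right-hand union, which is the claimed inclusion.

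\textbf{The full case.} When $h$ is a full homomorphism, the inclusion $\{h(a):a\in\sigma_{\mathcal{A}}(a_1,\dots,a_n)\}\subseteq\sigma_{\mathcal{B}}(h(a_1),\dots,h(a_n))$ becomes an equality for every tuple, so the left-hand union above equals $\bigcup\{\sigma_{\mathcal{B}}(h(a_1),\dots,h(a_n)):a_i\in A_i\}$. It remains to check this equals $\bigcup\{\sigma_{\mathcal{B}}(b_1,\dots,b_n):b_i\in\mathsf{P}(h)(A_i)\}$; containment of the former in the latter is immediate, and conversely every tuple $(b_1,\dots,b_n)$ with $b_i\in\mathsf{P}(h)(A_i)$ is of the form $(h(a_1),\dots,h(a_n))$ for suitable $a_i\in A_i$, so the latter is contained in the former. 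Thus the two are equal and $\mathsf{P}(h)$ is a homomorphism.

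\textbf{Main obstacle.} There is no deep obstacle here — the argument is bookkeeping about commuting direct images with unions. The one point that needs a little care is making sure the quantifiers line up correctly: in the full case one must note that \emph{surjectivity onto the relevant tuples} is automatic because $\mathsf{P}(h)(A_i)$ is \emph{by definition} the set of all $h(a_i)$ with $a_i\in A_i$, so no extra hypothesis (such as surjectivity of $h$) is needed. I would also remark in passing that the inclusion in the first part can be strict — this is the ``$\mathsf{P}(h)$ is usually only an anti-homomorphism'' phenomenon the surrounding text alludes to — although the lemma as stated does not require exhibiting such an example.
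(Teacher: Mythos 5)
Your proposal is correct and follows essentially the same argument as the paper: both distribute the direct image of $h$ over the union defining $\sigma_{\mathsf{P}(\mathcal{A})}$, apply the homomorphism condition tuple-by-tuple, and identify $\bigcup\{\sigma_{\mathcal{B}}(b_1,\ldots,b_n): b_i\in\mathsf{P}(h)(A_i)\}$ with $\bigcup\{\sigma_{\mathcal{B}}(h(a_1),\ldots,h(a_n)): a_i\in A_i\}$, the only difference being that the paper starts from the right-hand side and you from the left. Your closing remarks about why no surjectivity hypothesis is needed and why the inclusion can be strict are accurate but not part of the paper's proof.
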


\begin{proof}
Given $\sigma\in \Sigma_{n}$ and nonempty $A_{1}, \ldots  , A_{n}\subseteq A$, we have that
\[ \sigma_{\mathsf{P}(\mathcal{B})}(\mathsf{P}( h )(A_{1}), \ldots  , \mathsf{P}( h )(A_{n}))=\bigcup\{\sigma_{\mathcal{B}}(b_{1}, \ldots  , b_{n}) : b_{i}\in \mathsf{P}( h )(A_{i})\}=\]
\[\bigcup\{\sigma_{\mathcal{B}}(b_{1}, \ldots  , b_{n}) : b_{i}\in \{ h (a) : a\in A_{i}\}\}=\bigcup\{\sigma_{\mathcal{B}}( h (a_{1}), \ldots  ,  h (a_{n})) : a_{i}\in A_{i}\},\]
which clearly contains
\[ \bigcup\{\{ h (a):  a\in\sigma_{\mathcal{A}}(a_{1}, \ldots  , a_{n})\} : a_{i}\in A_{i}\}= \{ h (a):  a\in \bigcup\{\sigma_{\mathcal{A}}(a_{1}, \ldots  , a_{n}) : a_{i}\in A_{i}\}\}=\]
\[ \{ h (a):  a\in \sigma_{\mathsf{P}(\mathcal{A})}(A_{1}, \ldots  , A_{n})\}=\mathsf{P}( h )(\sigma_{\mathsf{P}(\mathcal{A})}(A_{1}, \ldots  , A_{n})),\]
so that $\mathsf{P}( h )$ satisfies the required property. 

If $ h $ is a full homomorphism, $\sigma_{\mathcal{B}}( h (a_{1}), \ldots  ,  h (a_{n}))=\{ h (a) : a\in\sigma_{\mathcal{A}}(a_{1}, \ldots  , a_{n})\}$, and the inclusion in the equations above becomes an equality.
\end{proof}

So, let us restrict $\mathsf{P}$ for a moment to the category $\textbf{MAlg}_{=}(\Sigma)$, of $\Sigma$-multialgebras with only full homomorphisms between them as morphisms, and let us call this new transformation $\mathsf{P}_{=}:\textbf{MAlg}_{=}(\Sigma)\rightarrow \textbf{Alg}(\Sigma)$.

\begin{proposition}
$\mathsf{P}_{=}$ is, in fact, a functor from $\textbf{MAlg}_{=}(\Sigma)$ to $\textbf{Alg}(\Sigma)$.
\end{proposition}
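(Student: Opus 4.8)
The plan is to verify the two functoriality axioms for $\mathsf{P}_{=}$: that it sends identities to identities and respects composition, and crucially that it is well-defined on morphisms, i.e., that $\mathsf{P}_{=}(h)$ is genuinely a $\Sigma$-homomorphism in $\textbf{Alg}(\Sigma)$ whenever $h$ is a full homomorphism of multialgebras. This last point is exactly the content of the second sentence of Lemma~\ref{P is anti-hom}, so the bulk of the work is already done; what remains is mostly bookkeeping.

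First I would check that $\mathsf{P}_{=}$ sends an object $\mathcal{A}$ of $\textbf{MAlg}_{=}(\Sigma)$ to an object of $\textbf{Alg}(\Sigma)$: for each $\sigma\in\Sigma_{n}$ the map $\sigma_{\mathsf{P}(\mathcal{A})}$ takes an $n$-tuple of nonempty subsets of $A$ to a subset of $A$ which is nonempty because each $\sigma_{\mathcal{A}}(a_{1},\dots,a_{n})$ is nonempty (by the definition of multialgebra) and the union is over the nonempty index set $A_{1}\times\cdots\times A_{n}$; hence $\sigma_{\mathsf{P}(\mathcal{A})}$ is a well-defined operation on $\mathcal{P}^{*}(A)$, so $\mathsf{P}(\mathcal{A})$ is indeed a $\Sigma$-algebra. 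Next, for a full homomorphism $h:\mathcal{A}\rightarrow\mathcal{B}$, Lemma~\ref{P is anti-hom} gives that $\mathsf{P}(h)=\mathsf{P}_{=}(h)$ is a $\Sigma$-homomorphism from $\mathsf{P}(\mathcal{A})$ to $\mathsf{P}(\mathcal{B})$; I would also note that $\mathsf{P}(h)$ as a set-function is well-defined, since $h(a)\in B$ and the image of a nonempty set is nonempty, so it maps $\mathcal{P}^{*}(A)$ into $\mathcal{P}^{*}(B)$.

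Then I would verify the two categorical identities. For identities: $\mathsf{P}_{=}(\mathrm{id}_{\mathcal{A}})(A')=\{\mathrm{id}_{\mathcal{A}}(a):a\in A'\}=A'$, so $\mathsf{P}_{=}(\mathrm{id}_{\mathcal{A}})=\mathrm{id}_{\mathcal{P}^{*}(A)}=\mathrm{id}_{\mathsf{P}_{=}(\mathcal{A})}$. For composition, given full homomorphisms $h:\mathcal{A}\rightarrow\mathcal{B}$ and $g:\mathcal{B}\rightarrow\mathcal{C}$, one first observes that $g\circ h$ is again a full homomorphism (the composite of full homomorphisms is full, since equality is preserved through both steps), so $\mathsf{P}_{=}(g\circ h)$ is defined; then for $A'\in\mathcal{P}^{*}(A)$,
\[ \mathsf{P}_{=}(g\circ h)(A')=\{g(h(a)):a\in A'\}=\{g(b):b\in\{h(a):a\in A'\}\}=\mathsf{P}_{=}(g)\bigl(\mathsf{P}_{=}(h)(A')\bigr),\]
so $\mathsf{P}_{=}(g\circ h)=\mathsf{P}_{=}(g)\circ\mathsf{P}_{=}(h)$.

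There is no real obstacle here: the only genuinely nontrivial ingredient is the preservation of operations under $\mathsf{P}(h)$, which is handed to us by Lemma~\ref{P is anti-hom} precisely because we have passed to the subcategory with full homomorphisms. The mild point to be careful about is that $\textbf{MAlg}_{=}(\Sigma)$ really is a category, i.e., that full homomorphisms compose and include identities, which is what makes the composition-preservation computation above legitimate; I would spell this out in a line. Everything else is a routine unwinding of definitions.
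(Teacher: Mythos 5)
Your proof is correct and follows exactly the route the paper intends: the paper states this proposition without proof, relying on the second clause of Lemma~\ref{P is anti-hom} for the only nontrivial point (that $\mathsf{P}(h)$ is a $\Sigma$-homomorphism when $h$ is full), and your remaining checks — nonemptiness of the operations on $\mathcal{P}^{*}(A)$, preservation of identities and composition, and closure of full homomorphisms under composition — are the routine bookkeeping the authors left implicit.
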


Unfortunately, $\mathsf{P}_{=}$ is not injective on objects: take the signature $\Sigma_{s}$ with a single unary operator $s$, and consider the $\Sigma$-multialgebras $\mathcal{A}=(\{0,1\}, \{s_{\mathcal{A}}\})$ and $\mathcal{B}=(\{0,1\}, \{s_{\mathcal{B}}\})$ such that: $s_{\mathcal{A}}(0)=s_{\mathcal{A}}(1)=\{1\}$ and $s_{\mathcal{B}}(0)=s_{\mathcal{B}}(1)=\{0,1\}$.

\begin{figure}[H]
    \centering
    \hspace{0,1\textwidth}
    \begin{minipage}[b]{.35\textwidth}
        \centering
         \xymatrix{0 \ar[r]^{s_{\mathcal{A}}} & 1 \ar@(ur, dr)^{s_{\mathcal{A}}}}
    \end{minipage}%
    \hspace{0.2\textwidth}
    \begin{minipage}[b]{.35\textwidth}
    \centering
        \centering
        \xymatrix{0 \ar@/ ^2pc/ [rr]^{s_{\mathcal{B}}} \ar@(dl, ul)^{s_{\mathcal{B}}} & & 1 \ar@/ ^2pc/ [ll]^{s_{\mathcal{B}}} \ar@(ur, dr)^{s_{\mathcal{B}}}}
    \end{minipage}
    \begin{minipage}[t]{.5\textwidth}
        \caption*{The $\Sigma_{s}$-multialgebra $\mathcal{A}$}
    \end{minipage}%
    \begin{minipage}[t]{.4\textwidth}
        \caption*{The $\Sigma_{s}$-multialgebra $\mathcal{B}$}
    \end{minipage}%
\end{figure}

Clearly the two of then are not isomorphic, given that the result of an operation in $\mathcal{A}$ always has cardinality $1$ and in $\mathcal{B}$ alway has cardinality $2$.

However, we have that $s_{\mathsf{P}_{=}(\mathcal{A})}(\{0\})=s_{\mathsf{P}_{=}(\mathcal{A})}(\{1\})=s_{\mathsf{P}_{=}(\mathcal{A})}(\{0, 1\})=\{1\}$, while $s_{\mathsf{P}_{=}(\mathcal{B})}(\{0\})=s_{\mathsf{P}_{=}(\mathcal{A})}(\{1\})=s_{\mathsf{P}_{=}(\mathcal{A})}(\{0, 1\})=\{0,1\}$.

\begin{figure}[H]
    \centering
    \begin{minipage}[b]{.4\textwidth}
        \centering
         \xymatrix{ & \{0,1\} \ar[dr]^{s_{\mathcal{P}^{*}(\mathcal{A})}} & \\ \{0\}\ar[rr]^{s_{\mathcal{P}^{*}(\mathcal{A})}} & & \{1\} \save !R(.7) \ar@(ur, dr)^{s_{\mathcal{P}^{*}(\mathcal{A})}} \restore}
    \end{minipage}%
    \hspace{0.1\textwidth}
    \begin{minipage}[b]{.5\textwidth}
    \centering
        \centering
        \xymatrix{ & \{0,1\} \ar@(ur, ul)_{s_{\mathcal{P}^{*}(\mathcal{B})}} & \\ \{0\} \ar[ur]^{s_{\mathcal{P}^{*}(\mathcal{B})}} & & \{1\} \ar[ul]_{s_{\mathcal{P}^{*}(\mathcal{B})}}}
    \end{minipage}
    \begin{minipage}[t]{.5\textwidth}
        \caption*{The $\Sigma_{s}$-algebra $\mathsf{P}_{=}(\mathcal{A})$}
    \end{minipage}%
    \hspace{0.05\textwidth}
    \begin{minipage}[t]{.4\textwidth}
        \caption*{The $\Sigma_{s}$-algebra $\mathsf{P}_{=}(\mathcal{B})$}
    \end{minipage}%
\end{figure}

Taking the function $ h :\mathcal{P}^{*}(A)\rightarrow\mathcal{P}^{*}(B)$ such that $ h (\{0\})=\{0\}$, $ h (\{1\})=\{0,1\}$, and $ h (\{0,1\})=\{1\}$, we see that it is a bijection and a homomorphism, and therefore $ h :\mathsf{P}_{=}(\mathcal{A})\rightarrow\mathsf{P}_{=}(\mathcal{B})$ is an isomorphism.

\section{An improvement}

The problem with our definition of $\mathsf{P}_{=}$ is that it disregards the structure of the universe of $\mathcal{P}(\mathcal{A})$. So, we change our target category to reflect this structure.

\begin{definition}
Given a signature $\Sigma$, a $(\Sigma, \leq)$-algebra $\mathcal{A}$ is a triple $(A, \{\sigma_{\mathcal{A}}\}_{\sigma\in\Sigma}, \leq_{\mathcal{A}})$ such that:

\begin{enumerate}
\item $(A, \{\sigma_{\mathcal{A}}\}_{\sigma\in\Sigma})$ is a $\Sigma$-algebra;

\item $(A,\leq_{\mathcal{A}})$ is a complete, atomic and bottomless Boolean algebra;

\item if $A_{a}$ is the set of minimal elements of $(A, \leq_{\mathcal{A}})$ (atoms) less than or equal to $a$, for all $\sigma\in\Sigma_{n}$ and $a_{1}, \ldots  , a_{n}$ we have that
\[ \sigma_{\mathcal{A}}(a_{1}, \ldots  , a_{n})=\sup\{\sigma_{\mathcal{A}}(b_{1}, \ldots  , b_{n}):  (b_{1}, \ldots  , b_{n})\in A_{a_{1}}\times\cdots\times A_{a_{n}}\}.\]
\end{enumerate}
\end{definition}

\begin{proposition}
For $\mathcal{A}$ a \textit{$(\Sigma, \leq)$-algebra}, any $\sigma\in\Sigma_{n}$ and $a_{1}, \ldots  , a_{n}, b_{1}, \ldots  , b_{n}\in A$ such that $a_{1}\leq_{\mathcal{A}} b_{1}$, \ldots  , $a_{n}\leq_{\mathcal{A}} b_{n}$, one has $\sigma_{\mathcal{A}}(a_{1}, \ldots  , a_{n})\leq_{\mathcal{A}} \sigma_{\mathcal{A}}(b_{1}, \ldots  , b_{n})$.
\end{proposition}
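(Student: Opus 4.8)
The plan is to reduce the monotonicity of $\sigma_{\mathcal{A}}$ to two elementary facts: that $a\leq_{\mathcal{A}} b$ forces $A_a\subseteq A_b$, and that in any partially ordered set the supremum of a subset is bounded above by the supremum of any superset (when both exist).

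First I would check the inclusion of atom-sets. If $d$ is an atom with $d\leq_{\mathcal{A}} a_i$ and $a_i\leq_{\mathcal{A}} b_i$, then transitivity of $\leq_{\mathcal{A}}$ gives $d\leq_{\mathcal{A}} b_i$, so $d\in A_{b_i}$; hence $A_{a_i}\subseteq A_{b_i}$ for each $i$, and consequently $A_{a_1}\times\cdots\times A_{a_n}\subseteq A_{b_1}\times\cdots\times A_{b_n}$.

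Next, using clause (3) of the definition of a $(\Sigma,\leq)$-algebra, both $\sigma_{\mathcal{A}}(a_{1}, \ldots , a_{n})$ and $\sigma_{\mathcal{A}}(b_{1}, \ldots , b_{n})$ are written as suprema of the set $\{\sigma_{\mathcal{A}}(d_{1}, \ldots , d_{n}) : (d_{1}, \ldots , d_{n})\in A_{c_{1}}\times\cdots\times A_{c_{n}}\}$, with $c_i=a_i$ in the first case and $c_i=b_i$ in the second. These index sets are non-empty: since $\mathcal{A}$ is bottomless, each $a_i$ lies in $A$ and is therefore distinct from the bottom $0$ of $\mathcal{A}_{0}$, so atomicity ($a_i=\sup A_{a_i}$) forces $A_{a_i}\neq\emptyset$; hence the sets over which the suprema are taken are non-empty, and the suprema exist by completeness of the underlying bottomless Boolean algebra. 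By the first step the collection of values indexed by $A_{a_1}\times\cdots\times A_{a_n}$ is contained in the collection indexed by $A_{b_1}\times\cdots\times A_{b_n}$, so $\sigma_{\mathcal{A}}(b_{1}, \ldots , b_{n})$, being an upper bound for the larger collection, is an upper bound for the smaller one; therefore $\sigma_{\mathcal{A}}(a_{1}, \ldots , a_{n})\leq_{\mathcal{A}}\sigma_{\mathcal{A}}(b_{1}, \ldots , b_{n})$.

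I do not expect a serious obstacle: the argument is essentially a "supremum over a bigger set is bigger" observation. The only point requiring a little care is the non-emptiness of the atom-sets $A_{a_i}$, so that the suprema genuinely live in $\mathcal{A}$ rather than collapsing to the phantom bottom $0$ of $\mathcal{A}_{0}$; this is precisely where the "atomic" and "bottomless" hypotheses enter.
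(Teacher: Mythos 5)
Your proof is correct and follows essentially the same route as the paper: establish $A_{a_i}\subseteq A_{b_i}$ from $a_i\leq_{\mathcal{A}} b_i$, then compare the two suprema given by clause (3) of the definition. The only difference is that you explicitly verify non-emptiness of the atom-sets, a point the paper leaves implicit.
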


\begin{proof}
Since, for every $i\in\{1, \ldots  , n\}$, $a_{i}\leq_{\mathcal{A}}b_{i}$, we have that $A_{a_{i}}\subseteq A_{b_{i}}$, one concludes that $A_{a_{1}}\times\cdots\times A_{a_{n}}\subseteq A_{b_{1}}\times\cdots\times A_{b_{n}}$; this way,
\[ \sigma_{\mathcal{A}}(a_{1}, \ldots  , a_{n})=\sup\{\sigma_{\mathcal{A}}(c_{1}, \ldots  , c_{n}):  c_{i}\in A_{a_{i}}\}\leq_{\mathcal{A}} \sup\{\sigma_{\mathcal{A}}(c_{1}, \ldots  , c_{n}):  c_{i}\in A_{b_{i}}\}=\sigma_{\mathcal{A}}(b_{1}, \ldots  , b_{n}).\]
\end{proof}

For a $\Sigma$-multialgebra $\mathcal{A}=(A, \{\sigma_{\mathcal{A}}\}_{\sigma\in\sigma})$, we define $\mathbb{P}(\mathcal{A})$ as the $(\Sigma, \leq)$-algebra 
\[ (\mathcal{P}^{*}(A), \{\sigma_{\mathbb{P}(\mathcal{A})}\}_{\sigma\in\Sigma}, \leq_{\mathbb{P}(\mathcal{A})})\]
such that $(\mathcal{P}^{*}(A), \{\sigma_{\mathbb{P}(\mathcal{A})}\}_{\sigma\in\Sigma})$ is exactly the $\Sigma$-algebra $\mathsf{P}(\mathcal{A})$ defined at the beginning of Section \ref{A first attempt} and, for nonempty subsets $A_{1}$ and $A_{2}$ of $A$, $A_{1}\leq_{\mathbb{P}(\mathcal{A})} A_{2}$ if and only if $A_{1}\subseteq A_{2}$. Since:

\begin{enumerate}
\item $\mathsf{P}(\mathcal{A})$ is a $\Sigma$-algebra;

\item $(\mathcal{P}^{*}(A), \leq_{\mathbb{P}(\mathcal{A})})$ is a complete, atomic and bottomless Boolean algebra, given that $\mathcal{P}(A)$ is a complete, atomic Boolean algebra with at least two elements;

\item and, for $\sigma\in\Sigma_{n}$ and $\emptyset\neq A_{1}, \ldots  , A_{n}\subseteq A$, since the atoms of $A_{i}$ are exactly $A_{A_{i}}=\{\{a\}:  a\in A_{i}\}$,
\[ \sigma_{\mathbb{P}(\mathcal{A})}(A_{1}, \ldots  , A_{n})=\bigcup\{\sigma_{\mathcal{A}}(a_{1}, \ldots  , a_{n}) : a_{i}\in A_{i}\}=\bigcup\{\sigma_{\mathbb{P}(\mathcal{A})}(\{a_{1}\}, \ldots  , \{a_{n}\}) : \{a_{i}\}\in A_{A_{i}}\};\]
\end{enumerate}
we truly have that $\mathbb{P}(\mathcal{A})$ is a $(\Sigma, \leq)$-algebra.

\begin{definition}
Given $(\Sigma, \leq)$-algebras $\mathcal{A}=(A, \{\sigma_{\mathcal{A}}\}_{\sigma\in\Sigma}, \leq_{\mathcal{A}})$ and $\mathcal{B}=(B, \{\sigma_{\mathcal{B}}\}_{\sigma\in\Sigma}, \leq_{\mathcal{B}})$, a function $ h :A\rightarrow B$ is said to be a $(\Sigma, \leq)$-homomorphism, in which case we write $ h :\mathcal{A}\rightarrow\mathcal{B}$, when:

\begin{enumerate}
\item for all $\sigma\in\Sigma_{n}$ and $a_{1}, \ldots  , a_{n}\in A$ we have that 
\[  h (\sigma_{\mathcal{A}}(a_{1}, \ldots  , a_{n}))\leq_{\mathcal{B}}\sigma_{\mathcal{B}}( h (a_{1}), \ldots  ,  h (a_{n}));\]

\item $ h $ is continuous, meaning that, for every non-empty subset $A'\subseteq A$, $ h (\sup A')=\sup\{ h (a):  a\in A'\}$;

\item $ h $ maps minimal elements of $(A, \leq_{\mathcal{A}})$ to minimal elements of $(B, \leq_{\mathcal{B}})$.
\end{enumerate}
\end{definition}

Notice that a $(\Sigma, \leq)-$homomorphism is essentially an ``almost $\Sigma-$homomor\-phism'' which is also continuous and minimal-elements-preserving. Notice also that a $(\Sigma, \leq)$-homomorphism is order preserving: if $a\leq_{\mathcal{A}}b$, then $b=\sup\{a,b\}$, and therefore $ h (b)=\sup\{ h (a),  h (b)\}$, meaning that $ h (a)\leq_{\mathcal{B}} h (b)$.

\subsection{$\mathbb{P}$ is a functor}

\begin{lemma}
The composition of $(\Sigma, \leq)$-homomorphisms returns a $(\Sigma, \leq)$-homo\-morphism.
\end{lemma}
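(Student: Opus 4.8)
The plan is to verify that each of the three defining conditions for a $(\Sigma,\leq)$-homomorphism is preserved under composition. Let $h\colon\mathcal{A}\to\mathcal{B}$ and $g\colon\mathcal{B}\to\mathcal{C}$ be $(\Sigma,\leq)$-homomorphisms, and set $f=g\circ h\colon A\to C$. For each axiom I would chase the corresponding property through $h$ first and then through $g$, using along the way that these homomorphisms are order preserving (as noted right after the definition), since several steps require turning an inequality obtained after applying $h$ into an inequality after also applying $g$.

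The first step is the ``almost $\Sigma$-homomorphism'' condition. Starting from $f(\sigma_{\mathcal{A}}(a_1,\dots,a_n)) = g(h(\sigma_{\mathcal{A}}(a_1,\dots,a_n)))$, apply condition~1 for $h$ to get $h(\sigma_{\mathcal{A}}(a_1,\dots,a_n))\leq_{\mathcal{B}}\sigma_{\mathcal{B}}(h(a_1),\dots,h(a_n))$; then apply $g$, which is order preserving, to obtain $g(h(\sigma_{\mathcal{A}}(a_1,\dots,a_n)))\leq_{\mathcal{C}} g(\sigma_{\mathcal{B}}(h(a_1),\dots,h(a_n)))$; finally apply condition~1 for $g$ to bound the right side by $\sigma_{\mathcal{C}}(g(h(a_1)),\dots,g(h(a_n)))=\sigma_{\mathcal{C}}(f(a_1),\dots,f(a_n))$, and conclude by transitivity of $\leq_{\mathcal{C}}$. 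The second step is continuity: for non-empty $A'\subseteq A$, apply continuity of $h$ to get $h(\sup A')=\sup\{h(a):a\in A'\}=\sup h[A']$, then continuity of $g$ to the non-empty set $h[A']\subseteq B$ to get $g(\sup h[A']) = \sup\{g(b):b\in h[A']\} = \sup\{g(h(a)):a\in A'\}$, which is exactly $\sup\{f(a):a\in A'\}$; composing, $f(\sup A')=\sup\{f(a):a\in A'\}$. The third step, preservation of minimal elements, is immediate: $h$ sends atoms of $\mathcal{A}$ to atoms of $\mathcal{B}$, and $g$ sends atoms of $\mathcal{B}$ to atoms of $\mathcal{C}$, so $f$ sends atoms of $\mathcal{A}$ to atoms of $\mathcal{C}$.

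There is essentially no serious obstacle here; the only point that needs a moment's care is the first step, where the inequality produced by $h$'s first axiom must be pushed forward along $g$, and this is exactly where the fact that $(\Sigma,\leq)$-homomorphisms are order preserving is used rather than the continuity condition directly. One should also double-check that $h[A']$ is non-empty whenever $A'$ is, so that condition~2 for $g$ applies, but that is trivial. I would present the argument as three short labelled verifications and keep the routine substitutions compressed.
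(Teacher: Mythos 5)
Your proof is correct and follows essentially the same route as the paper's: each of the three conditions is verified by chasing it first through $h$ and then through $g$, with the order-preservation of $g$ used to push the inequality from $h$'s first axiom forward before applying $g$'s own first axiom. No gaps.
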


\begin{proof}
Take $(\Sigma, \leq)$-algebras $\mathcal{A}$, $\mathcal{B}$ and $\mathcal{C}$, and $(\Sigma, \leq)$-homomorphisms $ h :\mathcal{A}\rightarrow\mathcal{B}$ and $ h' :\mathcal{B}\rightarrow\mathcal{C}$.

\begin{enumerate}
\item $ h' \circ h $ obviously is a function from $A$ to $C$, so let $\sigma\in\Sigma_{n}$ and $a_{1}, \ldots  , a_{n}\in A$: we have that, since both $ h' $ and $ h $ are $(\Sigma, \leq)$-homomorphisms,
\[  h' \circ h (\sigma_{\mathcal{A}}(a_{1}, \ldots  , a_{n}))= h' ( h (\sigma_{\mathcal{A}}(a_{1}, \ldots  , a_{n})))\leq_{\mathcal{C}} h' (\sigma_{\mathcal{B}}( h (a_{1}), \ldots  ,  h (a_{n}))),\]
because $ h' $ is order-preserving and $ h (\sigma_{\mathcal{A}}(a_{1}, \ldots  , a_{n}))\leq_{\mathcal{B}}\sigma_{\mathcal{B}}( h (a_{1}), \ldots  ,  h (a_{n}))$, and
\[  h' (\sigma_{\mathcal{B}}( h (a_{1}), \ldots  ,  h (a_{n})))\leq_{\mathcal{C}}\sigma_{\mathcal{C}}( h' ( h (a_{1})), \ldots  ,  h' ( h (a_{n})))= \sigma_{\mathcal{C}}( h' \circ h (a_{1}), \ldots  ,  h' \circ h (a_{n}))\]
since $ h' $ is an ``almost homomorphism''.

\item Given a non-empty $A'\subseteq A$, we have that $ h (\sup A')=\sup\{ h (a):  a\in A'\}$ and, denoting $\{ h (a):  a\in A'\}$ as $B'$, we have that $ h' (\sup B')=\sup\{ h' (b):  b\in B'\}$; since $\sup B'= h (\sup A')$, we obtain
\[  h' \circ h (\sup A')=\sup\{ h' (b):  b\in B'\}=\sup\{ h' \circ h (a):  a\in A'\},\]
which means that $ h' \circ h $ is continuous.

\item Finally, if $a\in A$ is a minimal element, $ h (a)\in B$ is a minimal element, since $ h $ preserves minimal elements, and for the same reason $ h' \circ h (a)= h' ( h (a))\in C$ remains a minimal element still, and from all of the above $ h' \circ h $ is a $(\Sigma, \leq)$-homomorphism.
\end{enumerate}
\end{proof}

\begin{proposition}
When we take as objects all $(\Sigma, \leq)$-algebras and as morphisms all the $(\Sigma, \leq)$-homomorphisms between them, the resulting object is a category, denoted by $\textbf{Alg}_{\mathsf{B}}(\Sigma)$.
\end{proposition}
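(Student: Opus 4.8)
The plan is to verify the three axioms of a category directly, using the preceding lemma as the crucial input. First I would fix notation: the objects are $(\Sigma,\leq)$-algebras, and for objects $\mathcal{A}$, $\mathcal{B}$ the hom-set $\mathrm{Hom}(\mathcal{A},\mathcal{B})$ consists of all $(\Sigma,\leq)$-homomorphisms $h:\mathcal{A}\rightarrow\mathcal{B}$, with composition inherited from the composition of the underlying functions between the carrier sets. Since $(\Sigma,\leq)$-homomorphisms are in particular functions, composition is well-defined as a function, and the previous lemma guarantees that the composite of two $(\Sigma,\leq)$-homomorphisms is again a $(\Sigma,\leq)$-homomorphism, so composition is a genuine operation $\mathrm{Hom}(\mathcal{B},\mathcal{C})\times\mathrm{Hom}(\mathcal{A},\mathcal{B})\rightarrow\mathrm{Hom}(\mathcal{A},\mathcal{C})$.

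Next I would check that for each object $\mathcal{A}=(A,\{\sigma_{\mathcal{A}}\}_{\sigma\in\Sigma},\leq_{\mathcal{A}})$ the identity function $\mathrm{id}_A:A\rightarrow A$ is a $(\Sigma,\leq)$-homomorphism. This is immediate from the definition: condition (1) holds because $\mathrm{id}_A(\sigma_{\mathcal{A}}(a_1,\ldots,a_n))=\sigma_{\mathcal{A}}(\mathrm{id}_A(a_1),\ldots,\mathrm{id}_A(a_n))$ and $\leq_{\mathcal{A}}$ is reflexive; condition (2), continuity, holds because $\mathrm{id}_A(\sup A')=\sup A'=\sup\{\mathrm{id}_A(a):a\in A'\}$ for every non-empty $A'\subseteq A$; and condition (3) holds trivially since $\mathrm{id}_A$ sends every minimal element to itself.

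Then I would verify the identity and associativity laws. Both are inherited "for free" from the fact that morphisms are set functions: for any $h:\mathcal{A}\rightarrow\mathcal{B}$ one has $h\circ\mathrm{id}_A=h=\mathrm{id}_B\circ h$ as functions, hence as $(\Sigma,\leq)$-homomorphisms; and for composable $h:\mathcal{A}\rightarrow\mathcal{B}$, $h':\mathcal{B}\rightarrow\mathcal{C}$, $h'':\mathcal{C}\rightarrow\mathcal{D}$ one has $h''\circ(h'\circ h)=(h''\circ h')\circ h$ because function composition is associative. Finally I would note the only mild set-theoretic point — that $\textbf{Alg}_{\mathsf{B}}(\Sigma)$ is a (large, but legitimate) category in the same sense that $\textbf{Alg}(\Sigma)$ and $\textbf{MAlg}(\Sigma)$ are.

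I do not expect any real obstacle here: the entire content of the proposition is the closure of $(\Sigma,\leq)$-homomorphisms under composition, which is exactly the content of the preceding lemma, so the remaining verifications are the standard routine ones. If anything deserves a sentence of care, it is making explicit that composition of morphisms is defined to be composition of the underlying functions, so that associativity and the unit laws are inherited from $\textbf{Set}$; I would state that once and then dispatch the axioms briskly.
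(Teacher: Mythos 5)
Your proof is correct and follows exactly the route the paper intends: the preceding lemma supplies closure under composition, and the remaining verifications (identity morphisms, unit and associativity laws inherited from function composition) are the standard routine checks, which the paper simply omits.
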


So the transformation taking a $\Sigma$-multialgebra $\mathcal{A}$ to $\mathbb{P}(\mathcal{A})$ and a homomorphism $ h :\mathcal{A}\rightarrow\mathcal{B}$ to the $(\Sigma, \leq)$-homomorphism $\mathbb{P}( h ):\mathbb{P}(\mathcal{A})\rightarrow\mathbb{P}(\mathcal{B})$ such that, for an $\emptyset\neq A'\subseteq A$,
\[ \mathbb{P}( h )(A')=\{ h (a)\in B:  a\in A'\},\]
is a functor, of the form $\mathbb{P}:\textbf{MAlg}(\Sigma)\rightarrow \textbf{Alg}_{\mathsf{B}}(\Sigma)$. First we must show that $\mathbb{P}( h )$ is, in fact, an $(\Sigma, \leq)$-homomor-\\phism: given Lemma \ref{P is anti-hom} and the fact that $\mathsf{P}( h )=\mathbb{P}( h )$, we have that $\mathbb{P}( h )$ satisfies the first condition for being a\\ $(\Sigma, \leq)-$homomorphism; and, if $\emptyset \neq A''$ is a subset of $\mathcal{P}(A)$, we have that
\[ \mathbb{P}( h )(\sup A'')=\{ h (a):  a\in \sup A''\}=\{ h (a):  a\in \bigcup A''\}=\bigcup\{\{ h (a):  a\in A'\} : A'\in A''\}=\]
\[\bigcup\{\mathbb{P}( h )(A') : A'\in A''\}=\sup \{\mathbb{P}( h )(A'):  A'\in A''\},\]
what proves the satisfaction of the second condition; for the third condition, we remember that the minimal elements of $(\mathcal{P}^{*}(A), \subseteq)$ are the singletons, that is, sets of the form $\{a\}$ with $a\in A$, and since $\mathbb{P}( h )(\{a\})=\{ h (a)\}$, $\mathbb{P}( h )$ preserves minimal elements.

\begin{theorem}
$\mathbb{P}:\textbf{MAlg}(\Sigma)\rightarrow \textbf{Alg}_{\mathsf{B}}(\Sigma)$ is a functor.
\end{theorem}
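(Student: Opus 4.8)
The plan is to verify the two functor axioms, since by the preceding discussion we already know that $\mathbb{P}$ sends objects of $\textbf{MAlg}(\Sigma)$ to objects of $\textbf{Alg}_{\mathsf{B}}(\Sigma)$ and morphisms to morphisms (the latter being exactly the content of the three bullet-point computations just given, which establish that $\mathbb{P}(h)$ is a $(\Sigma,\leq)$-homomorphism whenever $h$ is a multialgebra homomorphism). What remains is to check that $\mathbb{P}$ preserves identities and composition.

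For identities, let $\mathcal{A}=(A,\{\sigma_{\mathcal{A}}\}_{\sigma\in\Sigma})$ be a $\Sigma$-multialgebra and $\mathrm{id}_{A}:\mathcal{A}\to\mathcal{A}$ the identity homomorphism. Then for any $\emptyset\neq A'\subseteq A$ we have $\mathbb{P}(\mathrm{id}_{A})(A')=\{\mathrm{id}_{A}(a):a\in A'\}=\{a:a\in A'\}=A'$, so $\mathbb{P}(\mathrm{id}_{A})=\mathrm{id}_{\mathcal{P}^{*}(A)}$, which is precisely the identity morphism on $\mathbb{P}(\mathcal{A})$ in $\textbf{Alg}_{\mathsf{B}}(\Sigma)$. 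For composition, let $h:\mathcal{A}\to\mathcal{B}$ and $h':\mathcal{B}\to\mathcal{C}$ be homomorphisms of $\Sigma$-multialgebras; I would compute, for $\emptyset\neq A'\subseteq A$,
\[ \mathbb{P}(h'\circ h)(A')=\{(h'\circ h)(a):a\in A'\}=\{h'(h(a)):a\in A'\}=\{h'(b):b\in\{h(a):a\in A'\}\}=\mathbb{P}(h')(\mathbb{P}(h)(A')), \]
so that $\mathbb{P}(h'\circ h)=\mathbb{P}(h')\circ\mathbb{P}(h)$, as required.

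Both axioms are thus immediate set-theoretic identities once it is granted that $\mathbb{P}$ is well-defined on objects and morphisms; the only genuine work, namely confirming that $\mathbb{P}(h)$ lands among the $(\Sigma,\leq)$-homomorphisms, has already been carried out in the paragraph preceding the statement (using Lemma~\ref{P is anti-hom} for the first condition, the commutation of direct images with unions for continuity, and the fact that singletons are the atoms of $(\mathcal{P}^{*}(A),\subseteq)$ for preservation of minimal elements). I do not expect any real obstacle here: this proof is purely a matter of assembling these observations and discharging the two routine verifications above. If anything deserves a word of care, it is simply noting that $\mathbb{P}(h)(A')$ is indeed non-empty whenever $A'$ is, so that it is a legitimate element of $\mathcal{P}^{*}(B)$ and the composite is well-typed.
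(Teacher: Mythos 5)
Your proof is correct and follows essentially the same route as the paper: the substantive work is the verification (carried out in the paragraph preceding the theorem) that $\mathbb{P}(h)$ is a $(\Sigma,\leq)$-homomorphism, after which preservation of identities and composition reduces to the elementary facts about direct images that you record. The paper in fact omits these last two routine checks entirely, so your write-up is, if anything, slightly more complete.
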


\subsection{$\mathbb{P}$ may be seem as part of a monad}

As is the case with the power-set functor, from $\textbf{Set}$ to itself, we may see $\mathbb{P}$, or even $\mathsf{P}$ and $\mathsf{P}_{=}$, as being part of a monad, although some minor modifications are necessary. So, consider the endofunctor $  \tilde{\mathbb{P}}:\textbf{MAlg}(\Sigma)\rightarrow\textbf{MAlg}(\Sigma)$ such that, for a $\Sigma$-multialgebra $\mathcal{A}=(A, \{\sigma_{\mathcal{A}}\}_{\sigma\in\Sigma})$, $\tilde{\mathbb{P}}\mathcal{A}$ is the $\Sigma$-multialgebra with universe $\mathcal{P}^{*}(A)$ and operations given by
\[ \sigma_{\tilde{\mathbb{P}}\mathcal{A}}(A_{1}, \ldots  , A_{n})=\{\{a\}\in \mathcal{P}^{*}(A) : a\in \bigcup\{\sigma_{\mathcal{A}}(a_{1}, \ldots  , a_{n}) : a_{i}\in A_{i}\}\},\]
for $\sigma$ an $n$-ary symbol and $A_{1}$ through $A_{n}$ non-empty subsets of $A$; and for $\Sigma$-multialgebras $\mathcal{A}$ and $\mathcal{B}$, a homomorphism $ h :\mathcal{A}\rightarrow\mathcal{B}$ and a non-empty $A'\subseteq A$, $\tilde{\mathbb{P}} h :\tilde{\mathbb{P}}\mathcal{A}\rightarrow \tilde{\mathbb{P}}\mathcal{B}$ satisfies $\tilde{\mathbb{P}} h (A')=\{ h (a)\in B: a \in A'\}$. Notice that $\tilde{\mathbb{P}}\mathcal{A}$ is almost the same as $\mathsf{P}(\mathcal{A})$, with the difference that in the latter, operations return subsets of $A$, while in the former they return sets of singletons of $A$, whose union is exactly the result of the operation as performed in $\mathsf{P}(\mathcal{A})$.

For the natural transformations to form a monad together with $\tilde{\mathbb{P}}$, we chose the obvious candidates: $\eta:1_{\textbf{MAlg}(\Sigma)}\rightarrow \tilde{\mathbb{P}}$ and $\epsilon:\tilde{\mathbb{P}}\circ \tilde{\mathbb{P}}\rightarrow \tilde{\mathbb{P}}$ given by, for a $\Sigma$-multialgebra $\mathcal{A}$, an element $a$ of $\mathcal{A}$ and a non-empty collection $\{A_{i}\}_{i\in I}$ of non-empty subsets of $A$, $\eta_{\mathcal{A}}(a)=\{a\}$ and $\epsilon_{\mathcal{A}}(\{A_{i}\}_{i\in I})=\bigcup\{A_{i} : i\in I\}$.

\begin{proposition}
For any $\Sigma$-multialgebra $\mathcal{A}$, $\eta_{\mathcal{A}}$ and $\epsilon_{\mathcal{A}}$ are, indeed, homomorphisms.
\end{proposition}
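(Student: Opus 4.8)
The plan is to verify the homomorphism condition from Section~1 for each of $\eta_{\mathcal{A}}$ and $\epsilon_{\mathcal{A}}$ separately; that is, for every $n$-ary symbol $\sigma$ and appropriate tuple of arguments, check that the image of the multioperation applied to the arguments is contained in the multioperation applied to the images. In both cases the proof is a direct unravelling of the definitions of $\sigma_{\tilde{\mathbb{P}}\mathcal{A}}$ and $\sigma_{\tilde{\mathbb{P}}\tilde{\mathbb{P}}\mathcal{A}}$, so I do not expect any genuine obstacle; the only thing requiring a little care is keeping track of the ``sets of singletons'' bookkeeping, since $\tilde{\mathbb{P}}\mathcal{A}$ has universe $\mathcal{P}^{*}(A)$ but its operations output \emph{sets of singletons} rather than arbitrary subsets.

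For $\eta_{\mathcal{A}}:1_{\textbf{MAlg}(\Sigma)}\mathcal{A}\to\tilde{\mathbb{P}}\mathcal{A}$, i.e. $\eta_{\mathcal{A}}(a)=\{a\}$, I would fix $\sigma\in\Sigma_{n}$ and $a_{1},\ldots,a_{n}\in A$ and compute both sides. The left-hand side is $\{\eta_{\mathcal{A}}(a) : a\in\sigma_{\mathcal{A}}(a_{1},\ldots,a_{n})\}=\{\{a\} : a\in\sigma_{\mathcal{A}}(a_{1},\ldots,a_{n})\}$. The right-hand side is $\sigma_{\tilde{\mathbb{P}}\mathcal{A}}(\eta_{\mathcal{A}}(a_{1}),\ldots,\eta_{\mathcal{A}}(a_{n}))=\sigma_{\tilde{\mathbb{P}}\mathcal{A}}(\{a_{1}\},\ldots,\{a_{n}\})$, which by the definition of $\sigma_{\tilde{\mathbb{P}}\mathcal{A}}$ unfolds to $\{\{a\}\in\mathcal{P}^{*}(A) : a\in\bigcup\{\sigma_{\mathcal{A}}(b_{1},\ldots,b_{n}) : b_{i}\in\{a_{i}\}\}\}=\{\{a\} : a\in\sigma_{\mathcal{A}}(a_{1},\ldots,a_{n})\}$, since the only choice is $b_{i}=a_{i}$. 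Thus the two sides are actually equal, and in particular the required inclusion holds; $\eta_{\mathcal{A}}$ is even a full homomorphism.

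For $\epsilon_{\mathcal{A}}:\tilde{\mathbb{P}}\tilde{\mathbb{P}}\mathcal{A}\to\tilde{\mathbb{P}}\mathcal{A}$, i.e. $\epsilon_{\mathcal{A}}(\{A_{i}\}_{i\in I})=\bigcup\{A_{i} : i\in I\}$, I would fix $\sigma\in\Sigma_{n}$ and arguments $\mathcal{X}_{1},\ldots,\mathcal{X}_{n}$, each a non-empty set of non-empty subsets of $A$ (an element of the universe of $\tilde{\mathbb{P}}\tilde{\mathbb{P}}\mathcal{A}$). The left-hand side is $\{\epsilon_{\mathcal{A}}(\mathcal{Z}) : \mathcal{Z}\in\sigma_{\tilde{\mathbb{P}}\tilde{\mathbb{P}}\mathcal{A}}(\mathcal{X}_{1},\ldots,\mathcal{X}_{n})\}$, and by the definition of the operation in the doubly-iterated multialgebra $\sigma_{\tilde{\mathbb{P}}\tilde{\mathbb{P}}\mathcal{A}}(\mathcal{X}_{1},\ldots,\mathcal{X}_{n})$ consists of the singletons $\{A'\}$ with $A'\in\bigcup\{\sigma_{\tilde{\mathbb{P}}\mathcal{A}}(A_{1},\ldots,A_{n}) : A_{i}\in\mathcal{X}_{i}\}$; applying $\epsilon_{\mathcal{A}}$ to such a singleton $\{A'\}$ returns $A'$. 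So the left-hand side equals $\bigcup\{\sigma_{\tilde{\mathbb{P}}\mathcal{A}}(A_{1},\ldots,A_{n}) : A_{i}\in\mathcal{X}_{i}\}$, and expanding each $\sigma_{\tilde{\mathbb{P}}\mathcal{A}}(A_{1},\ldots,A_{n})$ this is the set of all singletons $\{a\}$ with $a\in\sigma_{\mathcal{A}}(a_{1},\ldots,a_{n})$ for some $a_{i}\in A_{i}$ and some $A_{i}\in\mathcal{X}_{i}$. On the other side, $\epsilon_{\mathcal{A}}(\mathcal{X}_{i})=\bigcup\mathcal{X}_{i}$, so $\sigma_{\tilde{\mathbb{P}}\mathcal{A}}(\epsilon_{\mathcal{A}}(\mathcal{X}_{1}),\ldots,\epsilon_{\mathcal{A}}(\mathcal{X}_{n}))=\sigma_{\tilde{\mathbb{P}}\mathcal{A}}(\bigcup\mathcal{X}_{1},\ldots,\bigcup\mathcal{X}_{n})$, which is the set of all singletons $\{a\}$ with $a\in\sigma_{\mathcal{A}}(a_{1},\ldots,a_{n})$ for some $a_{i}\in\bigcup\mathcal{X}_{i}$. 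Since ``$a_{i}\in A_{i}$ for some $A_{i}\in\mathcal{X}_{i}$'' is literally equivalent to ``$a_{i}\in\bigcup\mathcal{X}_{i}$'', the two sides coincide, so again equality holds, a fortiori the needed inclusion, and $\epsilon_{\mathcal{A}}$ is a (full) homomorphism. This completes the proof; the only delicate point, as noted, is carefully distinguishing an element $A'$ of $\mathcal{P}^{*}(A)$ from the singleton $\{A'\}$ living one level up, which the definition of the operations in $\tilde{\mathbb{P}}$ forces upon us.
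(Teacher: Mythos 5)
Your proof is correct: the paper states this proposition without proof, and your direct unravelling of the definitions of $\sigma_{\tilde{\mathbb{P}}\mathcal{A}}$ and $\sigma_{\tilde{\mathbb{P}}\tilde{\mathbb{P}}\mathcal{A}}$ is exactly the routine verification the authors evidently intended, including the careful handling of the extra layer of singletons. You even establish slightly more than asked, namely that $\eta_{\mathcal{A}}$ and $\epsilon_{\mathcal{A}}$ are \emph{full} homomorphisms, since both inclusions are in fact equalities.
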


\begin{proposition}
For any $\Sigma$-multialgebras $\mathcal{A}$ and $\mathcal{B}$, and homomorphism $ h :\mathcal{A}\rightarrow\mathcal{B}$, the identities $\tilde{\mathbb{P}} h \circ \eta_{\mathcal{A}}=\eta_{\mathcal{B}}\circ  h $ and $\tilde{\mathbb{P}} h \circ\epsilon_{\mathcal{A}}=\epsilon_{\mathcal{B}}\circ \tilde{\mathbb{P}}\tilde{\mathbb{P}} h $ are satisfied, meaning $\eta$ and $\epsilon$ are natural transformations.
\end{proposition}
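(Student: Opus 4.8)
The plan is to check both equalities pointwise, as equalities of underlying functions: the naturality squares for $\eta$ and $\epsilon$ live over the category $\textbf{MAlg}(\Sigma)$, so commutativity amounts to agreement of set-theoretic maps, the fact that each component $\eta_{\mathcal{A}}$, $\epsilon_{\mathcal{A}}$ is itself a homomorphism having already been recorded in the preceding proposition. So I fix $\Sigma$-multialgebras $\mathcal{A}$, $\mathcal{B}$ and a homomorphism $h:\mathcal{A}\rightarrow\mathcal{B}$, and verify the two identities separately.

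For the first identity, I take an arbitrary $a\in A$ and evaluate both sides. On the left, $\eta_{\mathcal{A}}(a)=\{a\}$, and then $\tilde{\mathbb{P}}h(\{a\})=\{h(a')\in B : a'\in\{a\}\}=\{h(a)\}$; on the right, $\eta_{\mathcal{B}}(h(a))=\{h(a)\}$. Since the two sides coincide for every $a\in A$, we get $\tilde{\mathbb{P}}h\circ\eta_{\mathcal{A}}=\eta_{\mathcal{B}}\circ h$.

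For the second identity, I take an arbitrary element of the universe of $\tilde{\mathbb{P}}\tilde{\mathbb{P}}\mathcal{A}$, namely a non-empty collection $\{A_{i}\}_{i\in I}$ of non-empty subsets of $A$, and again evaluate both sides. The left-hand side is $\tilde{\mathbb{P}}h(\epsilon_{\mathcal{A}}(\{A_{i}\}_{i\in I}))=\tilde{\mathbb{P}}h\big(\bigcup_{i\in I}A_{i}\big)=\{h(a) : a\in\bigcup_{i\in I}A_{i}\}$. For the right-hand side I must unwind $\tilde{\mathbb{P}}\tilde{\mathbb{P}}h$ as the direct-image map of $\tilde{\mathbb{P}}h$, i.e. $\tilde{\mathbb{P}}\tilde{\mathbb{P}}h(\{A_{i}\}_{i\in I})=\{\tilde{\mathbb{P}}h(A_{i}) : i\in I\}=\{\{h(a) : a\in A_{i}\} : i\in I\}$, and then $\epsilon_{\mathcal{B}}$ applied to this collection gives $\bigcup_{i\in I}\{h(a) : a\in A_{i}\}=\{h(a) : a\in\bigcup_{i\in I}A_{i}\}$. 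The two expressions agree, so $\tilde{\mathbb{P}}h\circ\epsilon_{\mathcal{A}}=\epsilon_{\mathcal{B}}\circ\tilde{\mathbb{P}}\tilde{\mathbb{P}}h$.

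The computations are routine set-theoretic manipulations, so there is no real obstacle; the only points that call for a little care are keeping track of the nesting of power-sets when evaluating $\tilde{\mathbb{P}}\tilde{\mathbb{P}}h$ — distinguishing the ``inner'' image map $\tilde{\mathbb{P}}h$ from the ``outer'' one — and noticing that what actually closes the $\epsilon$-square is precisely the interchange of a union with a direct image, $\{h(a) : a\in\bigcup_{i}A_{i}\}=\bigcup_{i}\{h(a) : a\in A_{i}\}$. No property of multialgebras beyond the definitions of $\tilde{\mathbb{P}}$, $\eta$ and $\epsilon$ is used. Combining these pointwise identities with the previous proposition (that the components are homomorphisms) yields that $\eta$ and $\epsilon$ are natural transformations.
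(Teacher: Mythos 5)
Your proof is correct and follows essentially the same route as the paper's: both identities are checked pointwise, with the $\eta$-square reducing to $\{h(a)\}=\{h(a)\}$ and the $\epsilon$-square closing via the interchange of union with direct image. No substantive differences.
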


\begin{proof}
Let $a$ be an element of $\mathcal{A}$. We have that $\tilde{\mathbb{P}} h \circ\eta_{\mathcal{A}}(a)=\tilde{\mathbb{P}} h (\eta_{\mathcal{A}}(a))$, and since $\eta_{\mathcal{A}}(a)=\{a\}$, we have that $\tilde{\mathbb{P}} h \circ\eta_{\mathcal{A}}(a)=\{ h (a)\}$. Meanwhile, $\eta_{\mathcal{B}}\circ h (a)=\eta_{\mathcal{A}}( h (a))=\{ h (a)\}$, and as stated both expressions coincide.

Now, let $\{A_{i}\}_{i\in I}$ be an element of $\tilde{\mathbb{P}}\tilde{\mathbb{P}}\mathcal{A}$, meaning it is a non-empty set of non-empty subsets of $\mathcal{A}$:\\ $\tilde{\mathbb{P}} h \circ\epsilon_{\mathcal{A}}(\{A_{i}\}_{i\in I})=\tilde{\mathbb{P}} h (\epsilon_{\mathcal{A}}(\{A_{i}\}_{i\in I}))$, and since $\epsilon_{\mathcal{A}}(\{A_{i}\}_{i\in I})=\bigcup\{A_{i} : i\in I\}$, the whole expression simplifies to $\{ h (a)\ :\  a\in \bigcup\{A_{i} : i\in I\}\}$. In turn, 
\[ \epsilon_{\mathcal{B}}\circ \tilde{\mathbb{P}}\tilde{\mathbb{P}} h (\{A_{i}\}_{i\in I})=\epsilon_{\mathcal{B}}(\{\{ h (a)\ :\  a\in A_{i}\}\ :\  i\in I\}),\]
which is equal to
\[ \bigcup\{\{ h (a)\ :\  a\in A_{i}\} i\in I\}=\{ h (a)\ :\  a\in \bigcup\{A_{i} : i\in I\}\},\]
giving us the desired equality.
\end{proof}

\begin{theorem}
The triple of $\tilde{\mathbb{P}}$, $\eta$ and $\epsilon$ forms a monad.
\end{theorem}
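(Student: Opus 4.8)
The plan is to verify, componentwise at each $\Sigma$-multialgebra $\mathcal{A}$, the three coherence conditions that define a monad: the associativity law $\epsilon\circ\tilde{\mathbb{P}}\epsilon=\epsilon\circ\epsilon\tilde{\mathbb{P}}$ and the two unit laws $\epsilon\circ\tilde{\mathbb{P}}\eta=1_{\tilde{\mathbb{P}}}=\epsilon\circ\eta\tilde{\mathbb{P}}$. Everything else is already available: $\tilde{\mathbb{P}}$ has been introduced as an endofunctor on $\textbf{MAlg}(\Sigma)$, the components $\eta_{\mathcal{A}}$ and $\epsilon_{\mathcal{A}}$ have been shown to be homomorphisms, and $\eta$, $\epsilon$ have been shown to be natural transformations; so the theorem reduces exactly to those three identities of functions.

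The observation that makes the verification painless is that $\eta$, $\epsilon$ and the action of $\tilde{\mathbb{P}}$ on morphisms are all defined purely set-theoretically: $\eta_{\mathcal{A}}(a)=\{a\}$ takes a singleton, $\epsilon_{\mathcal{A}}(\{A_{i}\}_{i\in I})=\bigcup_{i\in I}A_{i}$ takes a union, and $\tilde{\mathbb{P}} h(A')=\{h(a):a\in A'\}$ takes a direct image. Consequently the $\Sigma$-multialgebraic structure — that is, the operations $\sigma_{\tilde{\mathbb{P}}\mathcal{A}}$ — plays no role in these identities, which are simply the defining identities of the covariant non-empty power-set monad on $\textbf{Set}$. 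Concretely: for the left unit law, given a non-empty $A'\subseteq A$ one has $\tilde{\mathbb{P}}\eta_{\mathcal{A}}(A')=\{\{a\}:a\in A'\}$, and $\epsilon_{\mathcal{A}}$ sends this to $\bigcup\{\{a\}:a\in A'\}=A'$; for the right unit law, viewing $A'$ as an element of $\tilde{\mathbb{P}}\mathcal{A}$ one has $\eta_{\tilde{\mathbb{P}}\mathcal{A}}(A')=\{A'\}$, whence $\epsilon_{\mathcal{A}}(\{A'\})=A'$; and for associativity, an element of $\tilde{\mathbb{P}}\tilde{\mathbb{P}}\tilde{\mathbb{P}}\mathcal{A}$ is a non-empty family $\{\mathcal{G}_{j}\}_{j\in J}$ of non-empty families $\mathcal{G}_{j}=\{A_{j,i}\}_{i\in I_{j}}$ of non-empty subsets of $A$, and a direct computation shows that both $\epsilon_{\mathcal{A}}\circ\tilde{\mathbb{P}}\epsilon_{\mathcal{A}}$ and $\epsilon_{\mathcal{A}}\circ\epsilon_{\tilde{\mathbb{P}}\mathcal{A}}$ send it to $\bigcup_{j\in J}\bigcup_{i\in I_{j}}A_{j,i}$.

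I do not expect a genuine obstacle; the argument is routine. The only points that require a little care are bookkeeping ones: first, confirming that each intermediate object really lands in $\mathcal{P}^{*}(-)$, i.e.\ remains non-empty — which holds because $A'\neq\emptyset$ forces $\{\{a\}:a\in A'\}\neq\emptyset$, and a non-empty union of non-empty sets is non-empty; and second, keeping straight the different instances of $\eta$ and $\epsilon$ that occur (for instance $\epsilon_{\mathcal{A}}:\tilde{\mathbb{P}}\tilde{\mathbb{P}}\mathcal{A}\to\tilde{\mathbb{P}}\mathcal{A}$ versus $\epsilon_{\tilde{\mathbb{P}}\mathcal{A}}:\tilde{\mathbb{P}}\tilde{\mathbb{P}}\tilde{\mathbb{P}}\mathcal{A}\to\tilde{\mathbb{P}}\tilde{\mathbb{P}}\mathcal{A}$), so that the domains and codomains of the composites match. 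Once these are pinned down, each of the three equalities is just associativity of union together with the fact that $\bigcup\{\{x\}\}=\{x\}$.
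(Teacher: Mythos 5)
Your proof is correct and follows essentially the same route as the paper: both verify the associativity law and the two unit laws pointwise by direct set-theoretic computation with unions and singletons, relying on the earlier propositions for the homomorphism and naturality facts. Your added remark that the $\Sigma$-operations play no role and that the identities are just those of the non-empty power-set monad on $\textbf{Set}$ is a nice clarification (and your indexing of the triple-power-set elements by families $I_{j}$ depending on $j$ is in fact slightly more careful than the paper's), but the substance of the verification is identical.
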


\begin{proof}
Let $\mathcal{A}$ be a $\Sigma$-multialgebra. We first must prove $\epsilon\circ \tilde{\mathbb{P}}\epsilon=\epsilon\circ\epsilon \tilde{\mathbb{P}}$, what amounts to $\epsilon_{\mathcal{A}}\circ \tilde{\mathbb{P}}\epsilon_{\mathcal{A}}=\epsilon_{\mathcal{A}}\circ \epsilon_{\tilde{\mathbb{P}}\mathcal{A}}$, as homomorphisms from $\tilde{\mathbb{P}}^{3}\mathcal{A}$ to $\tilde{\mathbb{P}}\mathcal{A}$. So, let $\{\{A_{i}^{j}\}_{i\in I}\}_{j\in J}$ be an element of $\tilde{\mathbb{P}}^{3}\mathcal{A}$, where $I$ and $J$ are non-empty sets of indexes and all $A_{i}^{j}$ are non-empty subsets of $A$: 
\[ \epsilon_{\mathcal{A}}\circ \tilde{\mathbb{P}}\epsilon_{\mathcal{A}}(\{\{A_{i}^{j}\}_{i\in I}\}_{j\in J})=\epsilon_{\mathcal{A}}(\{\epsilon_{\mathcal{A}}(\{A_{i}^{j}\ :\  i\in I\})\ :\  j\in J\})=\epsilon_{\mathcal{A}}(\{\bigcup\{A_{i}^{j} : i\in I\}\ :\  j\in J\}),\]
what equals $\bigcup\{\bigcup\{A_{i}^{j} : i\in I\} : j\in J\}$, while 
\[\epsilon_{\mathcal{A}}\circ\epsilon_{\tilde{\mathbb{P}}\mathcal{A}}(\{\{A_{i}^{j}\}_{i\in I}\}_{j\in J})=\epsilon_{\mathcal{A}}(\bigcup\{\{A_{i}^{j} : j\in J\}\}_{i\in I})=\bigcup\{\bigcup\{A_{i}^{j} : j\in J\} : i\in I\},\]
and it is clear that both sets are the same.

It remains to be proven $\epsilon\circ \tilde{\mathbb{P}}\eta=\epsilon\circ\eta \tilde{\mathbb{P}}=1_{\tilde{\mathbb{P}}}$, meaning that $\epsilon_{\mathcal{A}}\circ \eta_{\tilde{\mathbb{P}}\mathcal{A}}=\epsilon_{\mathcal{A}}\circ \tilde{\mathbb{P}}\eta_{\mathcal{A}}$, as homomorphisms from $\tilde{\mathbb{P}}\mathcal{A}$ to $\tilde{\mathbb{P}}\mathcal{A}$, and this coincides with the identity homomorphism on this multialgebra as well. So, we take a non-empty subset $A'$ of $A$, and we have that $\epsilon_{\mathcal{A}}\circ\eta_{\tilde{\mathbb{P}}\mathcal{A}}(A')=\epsilon_{\mathcal{A}}(\{A'\})=A'$, while for the other expression one derives
\[ \epsilon_{\mathcal{A}}\circ P\eta_{\mathcal{A}}(A')=\epsilon_{\mathcal{A}}(\{\eta_{\mathcal{A}}(a)\ :\  a\in A'\})=\epsilon_{\mathcal{A}}(\{\{a\}\ :\  a\in A'\})=\bigcup\{\{a\} : a\in A'\}=A',\]
what finishes the proof.
\end{proof}

\subsection{Multialgebras of atoms}

Given a $(\Sigma, \leq)$-algebra $\mathcal{A}$, take the set $\mathbb{A}((A, \leq_{\mathcal{A}}))$ of atoms of $(A,\leq_{\mathcal{A}})$, that is, the set of minimal elements of this partially ordered set (equal to $A_{1}$ as well). For a $\sigma\in\Sigma_{n}$ and atoms $a_{1}, \ldots  , a_{n}\in \mathbb{A}((A, \leq_{\mathcal{A}}))$, we define 
\[ \sigma_{\mathbb{A}(\mathcal{A})}(a_{1}, \ldots  , a_{n})=\{a\in \mathbb{A}((A, \leq_{\mathcal{A}})):  a\leq_{\mathcal{A}} \sigma_{\mathcal{A}}(a_{1}, \ldots  , a_{n})\}=A_{\sigma_{\mathcal{A}}(a_{1}, \ldots  , a_{n})}.\]
This way, $(\mathbb{A}((A, \leq_{\mathcal{A}})), \{\sigma_{\mathbb{A}(\mathcal{A})}\}_{\sigma\in\Sigma})$ becomes a $\Sigma$-multialgebra, that we will denote by $\mathbb{A}(\mathcal{A})$ and call the multialgebra of atoms of $\mathcal{A}$. Given $(\Sigma, \leq)$-algebras $\mathcal{A}$ and $\mathcal{B}$ and a $(\Sigma, \leq)$-homomorphism $ h :\mathcal{A}\rightarrow \mathcal{B}$, we also define $\mathbb{A}( h ):\mathbb{A}((A, \leq_{\mathcal{A}}))\rightarrow\mathbb{A}((B, \leq_{\mathcal{B}}))$ as the restriction of $ h $ to $\mathbb{A}((A, \leq_{\mathcal{A}}))\subseteq A$. It is well-defined since every $(\Sigma, \leq)$-homomorphism preserves minimal elements, that is, atoms.

For $\sigma\in\Sigma_{n}$ and atoms $a_{1}, \ldots  , a_{n}\in \mathbb{A}((A, \leq_{\mathcal{A}}))$ we have that 
\[ \{\mathbb{A}( h )(a):  a\in \sigma_{\mathbb{A}(\mathcal{A})}(a_{1}, \ldots  , a_{n})\}=\{ h (a):  a\in\sigma_{\mathbb{A}(\mathcal{A})}(a_{1}, \ldots  , a_{n})\}=\]
\[\{ h (a)\in \mathbb{A}((B, \leq_{\mathcal{B}})):  a\leq_{\mathcal{A}} \sigma_{\mathcal{A}}(a_{1}, \ldots  , a_{n})\}\]
and, since $a\leq_{\mathcal{A}} \sigma_{\mathcal{A}}(a_{1}, \ldots  , a_{n})$ implies $ h (a)\leq_{\mathcal{B}} h (\sigma_{\mathcal{A}}(a_{1}, \ldots  , a_{n}))$ given $ h $ is order preserving, which in turn implies $ h (a)\leq_{\mathcal{B}}\sigma_{\mathcal{B}}( h (a_{1}), \ldots  ,  h (a_{n}))$ since $ h $ is an ``almost homomorphism'', we get that
\[ \{ h (a)\in \mathbb{A}((B, \leq_{\mathcal{B}})):  a\leq_{\mathcal{A}} \sigma_{\mathcal{A}}(a_{1}, \ldots  , a_{n})\}\subseteq\{b\in \mathbb{A}((B, \leq_{\mathcal{B}})):  b\leq_{\mathcal{B}}\sigma_{\mathcal{B}}( h (a_{1}), \ldots  ,  h (a_{n})\}=\]
\[ \sigma_{\mathbb{A}(\mathcal{B})}( h (a_{1}), \ldots  ,  h (a_{n}))=\sigma_{\mathbb{A}(\mathcal{B})}(\mathbb{A}( h )(a_{1}), \ldots  , \mathbb{A}( h )(a_{n})),\]
what proves $\mathbb{A}( h )$ is a homomorphism between $\Sigma$-multialgebras, and we may write $\mathbb{A}( h ):\mathbb{A}(\mathcal{A})\rightarrow\mathbb{A}(\mathcal{B})$.

The natural question is if $\mathbb{A}:\textbf{Alg}_{\mathsf{B}}(\Sigma)\rightarrow\textbf{MAlg}(\Sigma)$ is a functor, to which the answer is yes: it is easy to see that it distributes over the composition of morphisms and preserves the identical ones.

\section{$\textbf{Alg}_{\mathsf{B}}(\Sigma)$ and $\textbf{MAlg}(\Sigma)$ are equivalent} \label{sect-equiv}

Now, we aim to prove that $\textbf{Alg}_{\mathsf{B}}(\Sigma)$ and $\textbf{MAlg}(\Sigma)$ are actually equivalent categories, the equivalence being given by the functors $\mathbb{P}$ and $\mathbb{A}$. In order to prove that $\mathbb{P}$ and $\mathbb{A}$ form an equivalence of categories it is enough proving that both are full and faithful and $\mathbb{A}$ is a right adjoint of $\mathbb{P}$.

\subsection{$\mathbb{P}$ and $\mathbb{A}$ are full and faithful}

It is easy to see $\mathbb{P}$ is faithful: given $\Sigma$-multialgebras $\mathcal{A}$ and $\mathcal{B}$, and homomorphisms $ h ,  h' :\mathcal{A}\rightarrow\mathcal{B}$, if $\mathbb{P}( h )=\mathbb{P}( h' )$, we have that, for every $a\in A$,
\[ \{ h (a)\}=\mathbb{P}( h )(\{a\})=\mathbb{P}( h' )(\{a\})=\{ h' (a)\},\]
and therefore $ h = h' $.

\begin{proposition}
$\mathbb{A}$ is faithful.
\end{proposition}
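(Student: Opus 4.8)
The plan is to mirror the short argument just given for the faithfulness of $\mathbb{P}$, exploiting the fact that $\mathbb{A}(h)$ is literally a restriction of $h$. So I would take $(\Sigma,\leq)$-algebras $\mathcal{A}=(A,\{\sigma_{\mathcal{A}}\}_{\sigma\in\Sigma},\leq_{\mathcal{A}})$ and $\mathcal{B}=(B,\{\sigma_{\mathcal{B}}\}_{\sigma\in\Sigma},\leq_{\mathcal{B}})$, together with two $(\Sigma,\leq)$-homomorphisms $h,h'\colon\mathcal{A}\to\mathcal{B}$ such that $\mathbb{A}(h)=\mathbb{A}(h')$, and aim to conclude $h=h'$.

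The key observation is that, since $(A,\leq_{\mathcal{A}})$ is a complete, atomic and bottomless Boolean algebra, every element $a\in A$ satisfies $a=\sup A_{a}$, where $A_{a}$ is the (non-empty) set of atoms below $a$; atomicity gives $A_a\neq\emptyset$ because $a\in A$ and $A$ has no bottom element. Now fix an arbitrary $a\in A$. By continuity of $h$ applied to the non-empty set $A_a$, we get $h(a)=h(\sup A_a)=\sup\{h(d): d\in A_a\}$, and likewise $h'(a)=\sup\{h'(d): d\in A_a\}$. But for each atom $d\in A_a$ we have $d\in\mathbb{A}((A,\leq_{\mathcal{A}}))$, so $h(d)=\mathbb{A}(h)(d)=\mathbb{A}(h')(d)=h'(d)$ by the hypothesis $\mathbb{A}(h)=\mathbb{A}(h')$. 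Hence the two sets $\{h(d): d\in A_a\}$ and $\{h'(d): d\in A_a\}$ coincide, so their suprema coincide, giving $h(a)=h'(a)$. Since $a$ was arbitrary, $h=h'$, and therefore $\mathbb{A}$ is faithful.

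I do not expect any serious obstacle here; the only point that needs a moment's care is to justify that $A_a$ is non-empty so that continuity (which is only stipulated for \emph{non-empty} subsets) may be invoked — and this is immediate from atomicity together with bottomlessness. One could alternatively phrase the whole argument via the isomorphism $\mathcal{A}\cong\mathcal{P}(A_1)$ with $\emptyset$ removed, identifying $a$ with $A_a$ and noting that $h$ is then determined by its values on singletons $\{d\}$, i.e.\ on atoms; but the direct argument above via continuity is cleaner and avoids invoking the representation explicitly. I would present the direct version.

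\begin{proof}
Let $\mathcal{A}=(A,\{\sigma_{\mathcal{A}}\}_{\sigma\in\Sigma},\leq_{\mathcal{A}})$ and $\mathcal{B}=(B,\{\sigma_{\mathcal{B}}\}_{\sigma\in\Sigma},\leq_{\mathcal{B}})$ be $(\Sigma,\leq)$-algebras, and let $h,h'\colon\mathcal{A}\to\mathcal{B}$ be $(\Sigma,\leq)$-homomorphisms with $\mathbb{A}(h)=\mathbb{A}(h')$. Fix $a\in A$. Since $(A,\leq_{\mathcal{A}})$ is a complete, atomic and bottomless Boolean algebra, the set $A_{a}$ of atoms below $a$ is non-empty and $a=\sup A_{a}$. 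By continuity of $h$ and of $h'$ on the non-empty set $A_{a}$,
\[
h(a)=h(\sup A_{a})=\sup\{h(d): d\in A_{a}\},\qquad h'(a)=h'(\sup A_{a})=\sup\{h'(d): d\in A_{a}\}.
\]
For each $d\in A_{a}$ we have $d\in\mathbb{A}((A,\leq_{\mathcal{A}}))$, and since $\mathbb{A}(h)$ and $\mathbb{A}(h')$ are the restrictions of $h$ and $h'$ to the atoms, $h(d)=\mathbb{A}(h)(d)=\mathbb{A}(h')(d)=h'(d)$. Hence $\{h(d): d\in A_{a}\}=\{h'(d): d\in A_{a}\}$, so $h(a)=h'(a)$. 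As $a\in A$ was arbitrary, $h=h'$, proving that $\mathbb{A}$ is faithful.
\end{proof}
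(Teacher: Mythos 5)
Your proof is correct and is essentially the same as the paper's: both write $a=\sup A_{a}$ by atomicity, apply continuity of $h$ and $h'$, and use agreement on atoms to conclude $h(a)=h'(a)$. Your extra remark that $A_{a}\neq\emptyset$ (so that continuity, stipulated only for non-empty subsets, applies) is a small point of care the paper leaves implicit.
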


\begin{proof}
Given $(\Sigma, \leq)$-algebras $\mathcal{A}$ and $\mathcal{B}$, and $(\Sigma, \leq)$-homomorphisms $ h ,  h' :\mathcal{A}\rightarrow\mathcal{B}$, suppose that $\mathbb{A}( h )=\mathbb{A}( h' )$. Then, for every $a\in A$, we can write $a=\sup A_{a}$, since $(A, \leq_{\mathcal{A}})$ is atomic.

Since $ h $ and $ h' $ are continuous, $ h (a)=\sup\{ h (a'):  a'\in A_{a}\}$ and $ h' (a)=\sup\{ h' (a'):  a'\in A_{a}\}$. But, since $\mathbb{A}( h )=\mathbb{A}( h' )$, $ h $ and $ h' $ are the same when restricted to atoms, and therefore $\{ h (a'):  a'\in A_{a}\}=\{ h' (a'):  a'\in A_{a}\}$. This means that $ h (a)= h' (a)$ and, since $a$ is arbitrary, $ h = h' $.
\end{proof}

Now, given $\Sigma$-multialgebras $\mathcal{A}$ and $\mathcal{B}$, and a $(\Sigma, \leq)$-homomorphism $ h :\mathbb{P}(\mathcal{A})\rightarrow\mathbb{P}(\mathcal{B})$, to prove that $\mathbb{P}$ is also full we must find a homomorphism $ h' :\mathcal{A}\rightarrow\mathcal{B}$ such that $\mathbb{P}( h' )= h $.

For every $a\in A$, $\{a\}$ is an atom and, since $ h $ preserves atoms, $ h (\{a\})$ is an atom of $\mathbb{P}(B)$, and therefore of the form $\{b_{a}\}$ for some $b_{a}\in B$. We define $ h' :\mathcal{A}\rightarrow\mathcal{B}$ by $ h' (a)=b_{a}$. First of all, we must show $ h' $ is in fact a homomorphism, which is quite analogous to the proof of the same fact for $\mathbb{A}( h )$. Given $\sigma\in\Sigma_{n}$ and $a_{1}, \ldots  , a_{n}\in A$,
\[ \{ h' (a):  a\in \sigma_{\mathcal{A}}(a_{1}, \ldots  , a_{n})\}=\{b_{a}:  a\in\sigma_{\mathcal{A}}(a_{1}, \ldots  , a_{n})\}=\sup\{\{b_{a}\}:  a\in\sigma_{\mathcal{A}}(a_{1}, \ldots  , a_{n})\}=\]
\[\sup\{ h (\{a\}):  a\in\sigma_{\mathcal{A}}(a_{1}, \ldots  , a_{n})\}=h (\sup \{\{a\}:  a\in\sigma_{\mathcal{A}}(a_{1}, \ldots  , a_{n})\}),\]
given that  $ h $ is continuous, and since it is an ``almost homomorphism'' this equals
\[  h (\sigma_{\mathcal{A}}(a_{1}, \ldots  , a_{n}))= h (\sigma_{\mathbb{P}(\mathcal{A})}(\{a_{1}\}, \ldots  , \{a_{n}\}))\subseteq \sigma_{\mathbb{P}(\mathcal{B})}( h (\{a_{1}\}), \ldots  ,  h (\{a_{n}\}))=\]
\[ \sigma_{\mathbb{P}(\mathcal{B})}(\{b_{a_{1}}\}, \ldots  , \{b_{a_{n}}\})=\sigma_{\mathcal{B}}(b_{a_{1}}, \ldots  , b_{a_{n}})=\sigma_{\mathcal{B}}( h' (a_{1}), \ldots  ,  h' (a_{n})).\]
Now, when we consider $\mathbb{P}( h' )$, we see that, for every atom $\{a\}$ of $\mathbb{P}(\mathcal{A})$, $\mathbb{P}( h' )(\{a\})=\{b_{a}\}= h (\{a\})$, and so the restrictions of $ h $ and $\mathbb{P}( h' )$ to atoms are the same, and therefore $\mathbb{A}( h )=\mathbb{A}(\mathbb{P}( h' ))$. Since $\mathbb{A}$ is faithful, we discover that $ h =\mathbb{P}( h' )$ and, as we stated before, $\mathbb{P}$ is full.

Now it remains to be shown that $\mathbb{A}$ is also full. Given $(\Sigma, \leq)$-algebras $\mathcal{A}$ and $\mathcal{B}$, and a homomorphism $ h :\mathbb{A}(\mathcal{A})\rightarrow\mathbb{A}(\mathcal{B})$, we then define $ h' :\mathcal{A}\rightarrow\mathcal{B}$ by
\[  h' (a)=\sup\{ h (c):  c\in A_{a}\}.\]
First of all, we must prove that $ h' $ is a $(\Sigma, \leq)$-homomorphism, for which we shall need a few lemmas.

\begin{lemma}\label{supsup equals supunion}
In a complete, atomic and bottomless Boolean algebra $\mathcal{A}$, take a non-empty family of indexes $I$ and, for every $i\in I$, $X_{i}\subseteq A$. Suppose we have $x_{i}=\sup X_{i}$, for $i\in I$, and $X=\bigcup\{X_{i} : i\in I\}$. Then, $\sup\{x_{i}:  i\in I\}=\sup X$.
\end{lemma}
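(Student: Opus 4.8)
The plan is to verify, directly from the universal property of suprema, that $\sup X$ is the least upper bound of $\{x_{i}:i\in I\}$, so that the two suprema coincide by antisymmetry. First I would record the existence claims: since each $x_{i}=\sup X_{i}$ is assumed to exist and $\mathcal{A}$ has no bottom element, every $X_{i}$ is non-empty, and as $I$ is non-empty the union $X=\bigcup\{X_{i}:i\in I\}$ is a non-empty subset of $A$; hence $\sup X$ exists by completeness (condition 2 of Theorem~\ref{list}). Likewise $\{x_{i}:i\in I\}$ is non-empty, so $\sup\{x_{i}:i\in I\}$ exists as well.

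For the first inequality, for each $i\in I$ we have $X_{i}\subseteq X$, so $\sup X$ is an upper bound of $X_{i}$; by minimality of $x_{i}=\sup X_{i}$ among the upper bounds of $X_{i}$, we get $x_{i}\leq_{\mathcal{A}}\sup X$. Thus $\sup X$ is an upper bound of $\{x_{i}:i\in I\}$, whence $\sup\{x_{i}:i\in I\}\leq_{\mathcal{A}}\sup X$.

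For the reverse inequality, let $u$ be any upper bound of $\{x_{i}:i\in I\}$. Given $y\in X$, choose $i\in I$ with $y\in X_{i}$; then $y\leq_{\mathcal{A}}\sup X_{i}=x_{i}\leq_{\mathcal{A}}u$, so $u$ is an upper bound of $X$ and therefore $\sup X\leq_{\mathcal{A}}u$. Applying this with $u=\sup\{x_{i}:i\in I\}$ gives $\sup X\leq_{\mathcal{A}}\sup\{x_{i}:i\in I\}$, and combining with the previous paragraph and antisymmetry of $\leq_{\mathcal{A}}$ yields $\sup\{x_{i}:i\in I\}=\sup X$.

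I do not anticipate any genuine obstacle: this is the standard ``associativity of suprema'' identity, valid in any partially ordered set in which the relevant suprema exist, and the Boolean-algebra hypotheses are used only to guarantee that $\sup X$ (and each $\sup X_{i}$) is defined. The one point worth stating explicitly, as above, is why all these sets are non-empty, since in a bottomless algebra an empty subset has no supremum.
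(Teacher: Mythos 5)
Your proof is correct and follows essentially the same route as the paper's: both directions are established by showing each supremum is an upper bound for the other's underlying set (the paper phrases the second inequality directly with $b=\sup X$ rather than via an arbitrary upper bound $u$, but the content is identical). Your additional remarks on why the relevant sets are non-empty, so that the suprema exist in the bottomless setting, are a legitimate point the paper leaves implicit.
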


\begin{proof}
We define $a=\sup\{x_{i}:  i\in I\}$ and $b=\sup X$: first, we show that $a$ is an upper bound for $X$, so that $a\geq_{\mathcal{A}} b$. For every $x\in X$, we have that, since $X=\bigcup\{X_{i} : i\in I\}$, there exists $j\in I$ such that $x\in X_{j}$, and therefore $x_{j}\geq_{\mathcal{A}} x$. Since $a=\sup\{x_{i}:  i\in I\}$, we have that $a\geq_{\mathcal{A}} x_{j}$, and by transitivity $a\geq_{\mathcal{A}} x$, and therefore $a$ is indeed an upper bound for $X$.

Now we show that $b$ is an upper bound for $\{x_{i}:  i\in I\}$, and so $b\geq_{\mathcal{A}} a$ (and $a=b$). For every $i\in I$, we have that $b$ is an upper bound for $X_{i}$, since $X_{i}\subseteq X$ and $b$ is an upper bound for $X$, and therefore $b\geq_{\mathcal{A}} x_{i}$, since $x_{i}$ is the smallest upper bound for $X_{i}$. It follows that $b$ is indeed an upper bound for $\{x_{i}:  i\in I\}$, what finishes the proof.
\end{proof}

\begin{lemma}\label{union atoms is atoms sup}
In a complete, atomic and bottomless Boolean algebra $\mathcal{A}$, for a non-empty $C\subseteq A$ one has that $\bigcup\{A_{c} : c\in C\}=A_{\sup C}$.
\end{lemma}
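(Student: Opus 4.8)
The plan is to prove the two inclusions separately; the forward one is immediate from monotonicity, and the reverse one is where the Boolean structure is genuinely used.

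For $\bigcup\{A_{c} : c\in C\}\subseteq A_{\sup C}$: since $C\neq\emptyset$, $\sup C$ exists by condition~2 of Theorem~\ref{list}. If $d\in A_{c}$ for some $c\in C$, then $d$ is an atom and $d\leq_{\mathcal{A}} c\leq_{\mathcal{A}}\sup C$, hence $d\in A_{\sup C}$. No case analysis is needed here.

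For the reverse inclusion $A_{\sup C}\subseteq\bigcup\{A_{c} : c\in C\}$: take an atom $d$ with $d\leq_{\mathcal{A}}\sup C$, so that $\inf\{d,\sup C\}=d$ exists in $\mathcal{A}$. Apply Proposition~\ref{sem-inf-dist} with $S=C$ and $a=d$: since $\inf\{d,\sup C\}$ exists, the set $C^{d}=\{c\in C : \inf\{d,c\}\text{ exists}\}$ must be non-empty (were it empty, $\inf\{d,\sup C\}$ would not exist). Pick any $c\in C^{d}$. Then $\inf\{d,c\}$ is an element of $A$ satisfying $\inf\{d,c\}\leq_{\mathcal{A}} d$; since $d$ is minimal in $(A,\leq_{\mathcal{A}})$ and $\inf\{d,c\}\in A$, we conclude $\inf\{d,c\}=d$, i.e.\ $d\leq_{\mathcal{A}} c$. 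Therefore $d\in A_{c}\subseteq\bigcup\{A_{c'} : c'\in C\}$, as required.

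The only real content lies in the reverse inclusion, and within it the crucial step is invoking Proposition~\ref{sem-inf-dist} to guarantee that the atom $d$ sits below some single member of $C$ rather than merely below their join; everything else is monotonicity of $a\mapsto A_{a}$ together with minimality of atoms. One could argue equivalently inside the complete atomic Boolean algebra $\mathcal{A}_{0}$, where $\sup C$ is unchanged and the infinite-distributivity law for complete Boolean algebras applies directly, but Proposition~\ref{sem-inf-dist} is already tailored to the bottomless setting and keeps the bookkeeping shortest.
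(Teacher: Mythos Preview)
Your proof is correct and follows essentially the same route as the paper's: both inclusions are handled identically, with the forward one by transitivity and the reverse one by invoking Proposition~\ref{sem-inf-dist} to obtain some $c\in C$ for which $\inf\{d,c\}$ exists, then using minimality of the atom $d$ to conclude $d\leq_{\mathcal{A}} c$. Your write-up is in fact slightly more explicit about the last step (spelling out that $\inf\{d,c\}\leq_{\mathcal{A}} d$ forces $\inf\{d,c\}=d$), but there is no substantive difference.
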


\begin{proof}
If $d\in A_{c}$ for a $c\in C$, $d$ is an atom such that $d \leq_{\mathcal{A}} c$. Since $c\leq_{\mathcal{A}}\sup C$, $d\leq_{\mathcal{A}}\sup C$, and therefore $d$ belongs to $A_{\sup C}$. Thus $\bigcup\{A_{c} : c\in C\}\subseteq A_{\sup C}$.

Reciprocally, suppose that $d\in A_{\sup C}$. Then, $d$ is an atom such that $d\leq_{\mathcal{A}}\sup C$, and therefore $\inf\{d, \sup C\}=d$. It follows that the subset $C^{d}\subseteq C$, of $c\in C$ such that $\inf\{d,c\}$ exists, is not empty, by Proposition \ref{sem-inf-dist}. But if $c\in C^{d}$, $\inf\{d,c\}$ exists, and since $d$ is an atom, we have that $d\in A_{c}\subseteq \bigcup\{A_{c} : c\in C\}$, and from that $\bigcup\{A_{c} : c\in C\}=A_{\sup C}$.
\end{proof}

Since $\sigma_{\mathcal{A}}(a_{1}, \ldots  , a_{n})$ is equal to the supremum of $\{\sigma_{\mathcal{A}}(c_{1}, \ldots  , c_{n}):  (c_{1}, \ldots  , c_{n})\in A_{a_{1}}\times\cdots\times A_{a_{n}}\}$, from Lemma \ref{union atoms is atoms sup} we have that $A_{\sigma_{\mathcal{A}}(a_{1}, \ldots  , a_{n})}$ is equal to 
\[ \bigcup\{ A_{\sigma_{\mathcal{A}}(c_{1}, \ldots  , c_{n})} : c_{1}\in A_{a_{1}}, \ldots  , c_{n}\in A_{a_{n}}\},\]
that is, we have the following lemma.

\begin{lemma}\label{atoms operations equals union atoms}
For a $\sigma\in\Sigma_{n}$, and $a_{1}, \ldots  , a_{n}\in A$,
\[ A_{\sigma_{\mathcal{A}}(a_{1}, \ldots  , a_{n})}=\bigcup\{A_{\sigma_{\mathcal{A}}(c_{1}, \ldots  , c_{n})} : c_{i}\in A_{a_{i}}\}.\]
\end{lemma}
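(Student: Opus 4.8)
The statement to prove is Lemma~\ref{atoms operations equals union atoms}, namely that for $\sigma\in\Sigma_n$ and $a_1,\ldots,a_n\in A$ one has
\[ A_{\sigma_{\mathcal{A}}(a_{1}, \ldots , a_{n})}=\bigcup\{A_{\sigma_{\mathcal{A}}(c_{1}, \ldots , c_{n})} : c_{i}\in A_{a_{i}}\}. \]
The text immediately preceding the statement already sketches the argument, so the plan is simply to make it precise. The starting point is the third defining clause of a $(\Sigma,\leq)$-algebra, which gives
\[ \sigma_{\mathcal{A}}(a_{1}, \ldots , a_{n})=\sup\{\sigma_{\mathcal{A}}(c_{1}, \ldots , c_{n}):  (c_{1}, \ldots , c_{n})\in A_{a_{1}}\times\cdots\times A_{a_{n}}\}, \]
so that $\sigma_{\mathcal{A}}(a_{1}, \ldots , a_{n})=\sup C$ where $C$ is the set $\{\sigma_{\mathcal{A}}(c_{1}, \ldots , c_{n}) : c_{i}\in A_{a_{i}}\}\subseteq A$.

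First I would note that $C$ is non-empty: each $a_i$ is a non-zero element of the complete, atomic and bottomless Boolean algebra $(A,\leq_{\mathcal{A}})$, hence by atomicity $a_i=\sup A_{a_i}$ with $A_{a_i}\neq\emptyset$, so the product $A_{a_1}\times\cdots\times A_{a_n}$ is non-empty and therefore so is $C$. This lets me invoke Lemma~\ref{union atoms is atoms sup} with this particular $C$, which yields directly
\[ A_{\sup C}=\bigcup\{A_{c} : c\in C\}=\bigcup\{A_{\sigma_{\mathcal{A}}(c_{1}, \ldots , c_{n})} : c_{i}\in A_{a_{i}}\}. \]
Since $\sup C=\sigma_{\mathcal{A}}(a_{1}, \ldots , a_{n})$, the left-hand side is exactly $A_{\sigma_{\mathcal{A}}(a_{1}, \ldots , a_{n})}$, and the lemma follows. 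So the whole proof is: rewrite the operation as a supremum via clause~3 of the definition, check non-emptiness of the index set, and apply Lemma~\ref{union atoms is atoms sup}.

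There is essentially no obstacle here — this is a one-line corollary of the previous lemma once the bookkeeping is in place. The only point requiring a word of care is the non-emptiness of $C$ (equivalently of the product of the $A_{a_i}$), which is needed because Lemma~\ref{union atoms is atoms sup} is stated for non-empty $C$; this is guaranteed by atomicity of the algebra, which forces every element, in particular each $a_i$, to dominate at least one atom. I would present the argument in two or three sentences, explicitly citing clause~3 of the definition of $(\Sigma,\leq)$-algebra and Lemma~\ref{union atoms is atoms sup}.
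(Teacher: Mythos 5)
Your proof is correct and follows exactly the route the paper takes: rewrite $\sigma_{\mathcal{A}}(a_{1},\ldots,a_{n})$ as the supremum of $\{\sigma_{\mathcal{A}}(c_{1},\ldots,c_{n}) : c_{i}\in A_{a_{i}}\}$ via clause~3 of the definition of a $(\Sigma,\leq)$-algebra, then apply Lemma~\ref{union atoms is atoms sup}. Your explicit check that the index set is non-empty (via atomicity) is a small detail the paper leaves implicit, but it is needed to invoke that lemma, so it is a welcome addition rather than a divergence.
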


\begin{theorem}
$\mathbb{A}$ is full.
\end{theorem}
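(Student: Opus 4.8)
The plan is to take $(\Sigma,\leq)$-algebras $\mathcal{A}$ and $\mathcal{B}$ together with a $\Sigma$-multialgebra homomorphism $h:\mathbb{A}(\mathcal{A})\to\mathbb{A}(\mathcal{B})$, and to show that the map $h':\mathcal{A}\to\mathcal{B}$ defined above by $h'(a)=\sup\{h(c):c\in A_{a}\}$ is a $(\Sigma,\leq)$-homomorphism satisfying $\mathbb{A}(h')=h$. The last identity is the easiest part: for an atom $a$ we have $A_{a}=\{a\}$, so $h'(a)=\sup\{h(a)\}=h(a)$, and since $\mathbb{A}(h')$ is by definition the restriction of $h'$ to atoms, $\mathbb{A}(h')=h$. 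It remains to verify the three clauses in the definition of $(\Sigma,\leq)$-homomorphism for $h'$.

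First I would check continuity, which is where Lemmas~\ref{supsup equals supunion} and~\ref{union atoms is atoms sup} do the work. Given a non-empty $A'\subseteq A$, write $x=\sup A'$; by Lemma~\ref{union atoms is atoms sup}, $A_{x}=\bigcup\{A_{a}:a\in A'\}$. Then
\[
h'(\sup A')=\sup\{h(c):c\in A_{x}\}=\sup\{h(c):c\in\textstyle\bigcup\{A_{a}:a\in A'\}\},
\]
and applying Lemma~\ref{supsup equals supunion} (with $X_{a}=\{h(c):c\in A_{a}\}$ and $X=\bigcup_{a\in A'}X_{a}$) this equals $\sup\{\sup\{h(c):c\in A_{a}\}:a\in A'\}=\sup\{h'(a):a\in A'\}$, as required. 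Continuity also makes $h'$ order-preserving by the remark following the definition of $(\Sigma,\leq)$-homomorphism. The third clause, preservation of atoms, is immediate from $h'(a)=h(a)$ for atoms $a$ together with the fact that $h$ maps atoms of $\mathbb{A}(\mathcal{A})$ to atoms of $\mathbb{A}(\mathcal{B})$.

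The main obstacle is the first clause: showing $h'(\sigma_{\mathcal{A}}(a_{1},\dots,a_{n}))\leq_{\mathcal{B}}\sigma_{\mathcal{B}}(h'(a_{1}),\dots,h'(a_{n}))$. The idea is to reduce everything to atoms and use Lemma~\ref{atoms operations equals union atoms}. By continuity of $h'$ and the definition of $h'$ on atoms,
\[
h'(\sigma_{\mathcal{A}}(a_{1},\dots,a_{n}))=\sup\{h(c):c\in A_{\sigma_{\mathcal{A}}(a_{1},\dots,a_{n})}\},
\]
and by Lemma~\ref{atoms operations equals union atoms} the index set is $\bigcup\{A_{\sigma_{\mathcal{A}}(c_{1},\dots,c_{n})}:c_{i}\in A_{a_{i}}\}$. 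So it suffices to show, for each choice of atoms $c_{i}\in A_{a_{i}}$ and each atom $c\in A_{\sigma_{\mathcal{A}}(c_{1},\dots,c_{n})}$, that $h(c)\leq_{\mathcal{B}}\sigma_{\mathcal{B}}(h'(a_{1}),\dots,h'(a_{n}))$. Now $c\in\sigma_{\mathbb{A}(\mathcal{A})}(c_{1},\dots,c_{n})$ by the definition of $\mathbb{A}(\mathcal{A})$, so since $h$ is a multialgebra homomorphism, $h(c)\in\sigma_{\mathbb{A}(\mathcal{B})}(h(c_{1}),\dots,h(c_{n}))$, i.e. $h(c)\leq_{\mathcal{B}}\sigma_{\mathcal{B}}(h(c_{1}),\dots,h(c_{n}))$. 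Finally, since each $c_{i}\in A_{a_{i}}$, we have $h(c_{i})\in\{h(d):d\in A_{a_{i}}\}$ so $h(c_{i})\leq_{\mathcal{B}}h'(a_{i})$; applying the monotonicity of $\sigma_{\mathcal{B}}$ established in the Proposition after the definition of $(\Sigma,\leq)$-algebra, $\sigma_{\mathcal{B}}(h(c_{1}),\dots,h(c_{n}))\leq_{\mathcal{B}}\sigma_{\mathcal{B}}(h'(a_{1}),\dots,h'(a_{n}))$. Chaining these inequalities gives the bound for $h(c)$, and taking the supremum over all such $c$ yields clause~1. I expect the bookkeeping of the nested suprema and the correct invocation of Lemma~\ref{atoms operations equals union atoms} to be the only delicate point; everything else is a routine assembly of facts already proved.
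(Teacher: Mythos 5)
Your proposal is correct and follows essentially the same route as the paper: the same candidate $h'(a)=\sup\{h(c):c\in A_{a}\}$, the same use of Lemmas~\ref{supsup equals supunion}, \ref{union atoms is atoms sup} and \ref{atoms operations equals union atoms}, and the same appeal to monotonicity of $\sigma_{\mathcal{B}}$. The only (harmless) difference is organizational: where the paper verifies clause~1 by a chain of equalities and inequalities between nested suprema of unions of atom-sets, you bound each individual element $h(c)$ by $\sigma_{\mathcal{B}}(h'(a_{1}),\dots,h'(a_{n}))$ and then pass to the supremum, which is a slightly cleaner way of doing the same bookkeeping.
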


\begin{proof}
So, first of all, we prove that $ h' $ is a $(\Sigma, \leq)$-homomorphism.

\begin{enumerate}
\item First, it is clear that $ h' $ maps atoms into atoms: if $a$ is an atom, $A_{a}=\{a\}$ and
\[  h' (a)=\sup\{ h (c):  c\in A_{a}\}=\sup\{ h (a)\}= h (a),\]
which is an atom since $ h $ is a map between $\mathbb{A}(\mathcal{A})$ and $\mathbb{A}(\mathcal{B})$.

\item $ h' $ is continuous: for any non-empty set $C\subseteq A$, we remember that $ h' (c)=\sup\{ h (d):  d\in A_{c}\}$, and from Lemmas \ref{supsup equals supunion} and \ref{union atoms is atoms sup} we get that
\[ \sup\{ h' (c):  c\in C\}=\sup\{\sup\{ h (d):  d\in A_{c}\}:  c\in C\}=\]
\[ \sup \bigcup\{\{ h (d):  d\in A_{c}\} : c\in C\}=\sup\{ h (d):  d\in A_{\sup C}\}= h' (\sup C).\]

\item Since $\{ h (a):  a\in\sigma_{\mathbb{A}(\mathcal{A})}(a_{1}, \ldots  , a_{n})\}\subseteq \sigma_{\mathbb{A}(\mathcal{B})}( h (a_{1}), \ldots  ,  h (a_{n}))$, given that $ h $ is a homomorphism of multialgebras, it follows from Lemma \ref{atoms operations equals union atoms} that
\[  h' (\sigma_{\mathcal{A}}(a_{1}, \ldots  , a_{n}))=\sup\{ h (c):  c\in \bigcup\{A_{\sigma_{\mathcal{A}}(c_{1}, \ldots  , c_{n})} : c_{i}\in A_{a_{i}}\}\}=\]
\[ \sup\bigcup\{\{ h (c):  c\in \sigma_{\mathbb{A}(\mathcal{A})}(c_{1}, \ldots  , c_{n})\} : c_{i}\in A_{a_{i}}\}\leq_{\mathcal{B}}\]
\[ \sup \bigcup\{\sigma_{\mathbb{A}(\mathcal{B})}( h (c_{1}), \ldots  ,  h (c_{n})) : c_{i}\in A_{a_{i}}\},\]
where we have used that, for atoms $c_{1}, \ldots  , c_{n}$ of $\mathcal{A}$, $\sigma_{\mathbb{A}(\mathcal{A})}(c_{1}, \ldots  , c_{n})=A_{\sigma_{\mathcal{A}}(c_{1}, \ldots  , c_{n})}$; since, for atoms $d_{1}, \ldots  , d_{n}$ of $\mathcal{B}$, we also have that $\sigma_{\mathbb{A}(\mathcal{B})}(d_{1}, \ldots  , d_{n})=A_{\sigma_{\mathcal{B}}(d_{1}, \ldots  , d_{n})}$, this is equal to
\[ \sup \bigcup\{A_{\sigma_{\mathcal{B}}( h (c_{1}), \ldots  ,  h (c_{n}))} : c_{i}\in A_{a_{i}}\}=\sup \bigcup\{A_{\sigma_{\mathcal{B}}( h' (c_{1}), \ldots  ,  h' (c_{n}))} : c_{i}\in A_{a_{i}}\}.\]
Since $ h' $ is continuous, $c_{i}\leq_{\mathcal{A}}a_{i}$, for every $i\in \{1, \ldots  , n\}$, implies $ h' (c_{i})\leq_{\mathcal{B}} h' (a_{i})$, and therefore\\ $\sigma_{\mathcal{B}}( h' (c_{1}), \ldots  ,  h' (c_{n}))\leq_{\mathcal{B}}\sigma_{\mathcal{B}}( h' (a_{1}), \ldots  ,  h' (a_{n}))$ for $(c_{1}, \ldots  , c_{n})\in A_{a_{1}}\times\cdots\times A_{a_{n}}$. It follows that the union, for $(c_{1}, \ldots  , c_{n})$ in $A_{a_{1}}\times\cdots\times A_{a_{n}}$, of $A_{\sigma_{\mathcal{B}}( h' (c_{1}), \ldots  ,  h' (c_{n}))}$, is contained on $A_{\sigma_{\mathcal{B}}( h' (a_{1}), \ldots  ,  h' (a_{n}))}$, and therefore 
\[ \sup \bigcup\{A_{\sigma_{\mathcal{B}}( h' (c_{1}), \ldots  ,  h' (c_{n}))} : c_{i}\in A_{a_{i}}\}\leq_{\mathcal{B}}\sup A_{\sigma_{\mathcal{B}}( h' (a_{1}), \ldots  ,  h' (a_{n}))}=\sigma_{\mathcal{B}}( h' (a_{1}), \ldots  ,  h' (a_{n})).\]
\end{enumerate}
Now, for every atom $a$ of $\mathcal{A}$, we have that $ h' (a)= h (a)$, and therefore the restriction of $h'$ to atoms coincides with $h$, that is, $\mathbb{A}( h' )= h $, and since $ h $ was taken arbitrarily, $\mathbb{A}$ is full.
\end{proof}

\subsection{$\mathbb{P}$ and $\mathbb{A}$ are adjoint}

It remains to be shown that $\mathbb{P}$ and $\mathbb{A}$ are adjoint. To this end, consider the bijections
\[ \Phi_{\mathcal{B}, \mathcal{A}}:Hom_{\textbf{MAlg}(\Sigma)}(\mathbb{A}(\mathcal{B}), \mathcal{A})\rightarrow Hom_{\textbf{Alg}_{\mathsf{B}}(\Sigma)}(\mathcal{B}, \mathbb{P}(\mathcal{A})),\]
for $\mathcal{A}$ a $\Sigma$-multialgebra and $\mathcal{B}$ a $(\Sigma, \leq)$-algebra, given by, for $ h :\mathbb{A}(\mathcal{B})\rightarrow\mathcal{A}$ a homomorphism and $b$ an element of $\mathcal{B}$,
\[ \Phi_{\mathcal{B},\mathcal{A}}( h )(b)=\{ h (c):  c\in A_{b}\}.\]

\begin{proposition}
$\Phi_{\mathcal{B}, \mathcal{A}}( h )$ is a $(\Sigma, \leq)$-homomorphism.
\end{proposition}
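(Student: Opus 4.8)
I need to verify the three defining conditions of a $(\Sigma,\leq)$-homomorphism for the map $\Phi_{\mathcal{B},\mathcal{A}}(h):\mathcal{B}\to\mathbb{P}(\mathcal{A})$, where $\mathbb{P}(\mathcal{A})$ has universe $\mathcal{P}^{*}(A)$, operations given by ``accumulation'' of $\sigma_{\mathcal{A}}$, and order given by inclusion; here $h:\mathbb{A}(\mathcal{B})\to\mathcal{A}$ is a multialgebra homomorphism and $\Phi_{\mathcal{B},\mathcal{A}}(h)(b)=\{h(c):c\in B_{b}\}$, writing $B_{b}$ for the set of atoms of $\mathcal{B}$ below $b$. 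The plan is to treat the three conditions in the easiest-to-hardest order: first continuity, then preservation of minimal elements (atoms), then the ``almost homomorphism'' inequality. Along the way I will freely invoke that $(B,\leq_{\mathcal{B}})$ is a complete, atomic, bottomless Boolean algebra, together with Lemmas~\ref{supsup equals supunion} and~\ref{union atoms is atoms sup} governing the interaction of suprema, unions, and the atom-sets $B_{b}$; note also that $B_{b}$ is non-empty for every $b$ since $b=\sup B_{b}$ and $\mathcal{B}$ has no bottom.

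\emph{Continuity.} For a non-empty $B'\subseteq B$ I must check $\Phi_{\mathcal{B},\mathcal{A}}(h)(\sup B')=\sup\{\Phi_{\mathcal{B},\mathcal{A}}(h)(b):b\in B'\}$, where the supremum on the right is computed in $\mathcal{P}^{*}(A)$, i.e.\ it is a union. By Lemma~\ref{union atoms is atoms sup}, $B_{\sup B'}=\bigcup\{B_{b}:b\in B'\}$, so $\Phi_{\mathcal{B},\mathcal{A}}(h)(\sup B')=\{h(c):c\in\bigcup\{B_{b}:b\in B'\}\}=\bigcup\{\{h(c):c\in B_{b}\}:b\in B'\}=\bigcup\{\Phi_{\mathcal{B},\mathcal{A}}(h)(b):b\in B'\}$, which is exactly the required supremum in $\mathbb{P}(\mathcal{A})$. \emph{Minimal elements.} If $b$ is an atom of $\mathcal{B}$, then $B_{b}=\{b\}$, so $\Phi_{\mathcal{B},\mathcal{A}}(h)(b)=\{h(b)\}$, a singleton, and singletons are precisely the atoms of $\mathcal{P}^{*}(A)$; hence atoms go to atoms.

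\emph{The ``almost homomorphism'' inequality.} This is the main obstacle, and it is where the structure of a $(\Sigma,\leq)$-algebra must be used: recall that in $\mathcal{B}$ one has $\sigma_{\mathcal{B}}(b_{1},\dots,b_{n})=\sup\{\sigma_{\mathcal{B}}(c_{1},\dots,c_{n}):(c_{1},\dots,c_{n})\in B_{b_{1}}\times\cdots\times B_{b_{n}}\}$, so by Lemma~\ref{union atoms is atoms sup} (or rather its analogue recorded in Lemma~\ref{atoms operations equals union atoms}) $B_{\sigma_{\mathcal{B}}(b_{1},\dots,b_{n})}=\bigcup\{B_{\sigma_{\mathcal{B}}(c_{1},\dots,c_{n})}:c_{i}\in B_{b_{i}}\}$. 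Therefore
\[\Phi_{\mathcal{B},\mathcal{A}}(h)(\sigma_{\mathcal{B}}(b_{1},\dots,b_{n}))=\{h(c):c\in\textstyle\bigcup\{B_{\sigma_{\mathcal{B}}(c_{1},\dots,c_{n})}:c_{i}\in B_{b_{i}}\}\}=\textstyle\bigcup\{\{h(c):c\in\sigma_{\mathbb{A}(\mathcal{B})}(c_{1},\dots,c_{n})\}:c_{i}\in B_{b_{i}}\},\]
using that $\sigma_{\mathbb{A}(\mathcal{B})}(c_{1},\dots,c_{n})=B_{\sigma_{\mathcal{B}}(c_{1},\dots,c_{n})}$ for atoms $c_{i}$. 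Since $h$ is a multialgebra homomorphism, each set $\{h(c):c\in\sigma_{\mathbb{A}(\mathcal{B})}(c_{1},\dots,c_{n})\}$ is contained in $\sigma_{\mathcal{A}}(h(c_{1}),\dots,h(c_{n}))$, so the union above is contained in $\bigcup\{\sigma_{\mathcal{A}}(h(c_{1}),\dots,h(c_{n})):c_{i}\in B_{b_{i}}\}$. Finally I must identify this last union with $\sigma_{\mathbb{P}(\mathcal{A})}(\Phi_{\mathcal{B},\mathcal{A}}(h)(b_{1}),\dots,\Phi_{\mathcal{B},\mathcal{A}}(h)(b_{n}))$: by the definition of the accumulated operation, the latter is $\bigcup\{\sigma_{\mathcal{A}}(a_{1},\dots,a_{n}):a_{i}\in\Phi_{\mathcal{B},\mathcal{A}}(h)(b_{i})\}=\bigcup\{\sigma_{\mathcal{A}}(a_{1},\dots,a_{n}):a_{i}\in\{h(c):c\in B_{b_{i}}\}\}$, and reindexing $a_{i}=h(c_{i})$ with $c_{i}\in B_{b_{i}}$ gives precisely $\bigcup\{\sigma_{\mathcal{A}}(h(c_{1}),\dots,h(c_{n})):c_{i}\in B_{b_{i}}\}$. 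Hence $\Phi_{\mathcal{B},\mathcal{A}}(h)(\sigma_{\mathcal{B}}(b_{1},\dots,b_{n}))\subseteq\sigma_{\mathbb{P}(\mathcal{A})}(\Phi_{\mathcal{B},\mathcal{A}}(h)(b_{1}),\dots,\Phi_{\mathcal{B},\mathcal{A}}(h)(b_{n}))$, i.e.\ the first condition, and combining with the two paragraphs above, $\Phi_{\mathcal{B},\mathcal{A}}(h)$ is a $(\Sigma,\leq)$-homomorphism. The delicate point to get right is keeping the bookkeeping straight between ``union over atoms below $b_{i}$'' and ``elements of the image set $\Phi_{\mathcal{B},\mathcal{A}}(h)(b_{i})$,'' and making sure each appeal to atomicity/Lemma~\ref{atoms operations equals union atoms} is to $\mathcal{B}$ (not $\mathcal{A}$, which is merely a multialgebra).
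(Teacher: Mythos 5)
Your proof is correct and follows essentially the same route as the paper's: atoms go to singletons, continuity via $B_{\sup B'}=\bigcup\{B_{b}:b\in B'\}$ (Lemma~\ref{union atoms is atoms sup}), and the operation inequality via Lemma~\ref{atoms operations equals union atoms} together with the homomorphism property of $h$ and a reindexing of the accumulated union. The only differences are cosmetic (ordering of the three conditions, the notation $B_{b}$ in place of the paper's $A_{b}$, and the welcome explicit remark that $B_{b}\neq\emptyset$ so the image lands in $\mathcal{P}^{*}(A)$).
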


\begin{proof}
\begin{enumerate}
\item If $b$ is an atom, $A_{b}=\{b\}$, and therefore $\Phi_{\mathcal{B}, \mathcal{A}}( h )(b)=\{ h (c):  c\in A_{b}\}=\{ h (b)\}$, which is a singleton and therefore an atom of $\mathbb{P}(\mathcal{A})$.

\item Let $D$ be a non-empty subset of $\mathcal{B}$. We have that
\[ \Phi_{\mathcal{B}, \mathcal{A}}( h )(\sup D)=\{ h (c):  c\in A_{\sup D}\}=\{ h (c):  c\in \bigcup\{A_{d} : d\in D\}\}=\]
\[\bigcup\{\{ h (c):  c\in A_{d}\} : d\in D\}=\bigcup\{\Phi_{\mathcal{B}, \mathcal{A}}( h )(d) : d\in D\}=\sup\{\Phi_{\mathcal{B}, \mathcal{A}}( h )(d):  d\in D\},\]
since $A_{\sup D}=\bigcup_{d\in D}A_{d}$ and the supremum in $\mathbb{P}(\mathcal{A})$ is simply the union.

\item For $\sigma\in\Sigma_{n}$ and $b_{1}, \ldots  , b_{n}$ elements of $\mathcal{B}$, we have that
\[ \Phi_{\mathcal{B}, \mathcal{A}}( h )(\sigma_{\mathcal{B}}(b_{1}, \ldots  , b_{n}))=\{ h (c):  c\in A_{\sigma_{\mathcal{B}}(b_{1}, \ldots  , b_{n})}\}=\{ h (c):  c\in \bigcup\{ A_{\sigma_{\mathcal{B}}(c_{1}, \ldots  , c_{n})} : c_{i}\in A_{b_{i}}\}\}=\]
\[ \bigcup\{\{ h (c):  c\in A_{\sigma_{\mathcal{B}}(c_{1}, \ldots  , c_{n})}\} : c_{i}\in A_{b_{i}}\},\]
and, since $c_{1}, \ldots  , c_{n}$ are atoms, this is equal to
\[ \bigcup\{\{ h (c):  c\in \sigma_{\mathbb{A}(\mathcal{B})}(c_{1}, \ldots  , c_{n})\} : c_{i}\in A_{b_{i}}\}\subseteq \bigcup\{\sigma_{\mathcal{A}}( h (c_{1}), \ldots  ,  h (c_{n})) : c_{i}\in A_{b_{i}}\}=\]
\[ \bigcup\{\sigma_{\mathcal{A}}(a_{1}, \ldots  , a_{n}) : a_{i}\in \{ h  : c\in A_{b_{i}}\}\}=\bigcup\{\sigma_{\mathcal{A}}(a_{1}, \ldots  , a_{n}) : a_{i} \in \Phi_{\mathcal{B}, \mathcal{A}}( h )(b_{i})\}=\]
\[ \sigma_{\mathbb{P}(\mathcal{A})}(\Phi_{\mathcal{B}, \mathcal{A}}( h )(b_{1}), \ldots  , \Phi_{\mathcal{B}, \mathcal{A}}( h )(b_{n})).\]
\end{enumerate}
\end{proof}

Now, the $\Phi_{\mathcal{B}, \mathcal{A}}$ must be bijections between $Hom_{\textbf{MAlg}(\Sigma)}(\mathbb{A}(\mathcal{B}), \mathcal{A})$ and $Hom_{\textbf{Alg}_{\mathsf{B}}(\Sigma)}(\mathcal{B}, \mathbb{P}(\mathcal{A}))$. They are certainly injective: if $\Phi_{\mathcal{B}, \mathcal{A}}( h )=\Phi_{\mathcal{B}, \mathcal{A}}( h' )$, for every atom $b$ we have that
\[ \{ h (b)\}=\Phi_{\mathcal{B}, \mathcal{A}}( h )(b)=\Phi_{\mathcal{B}, \mathcal{A}}( h' )(b)=\{ h' (b)\},\]
and therefore $ h = h' $.

For the surjectivity, take a $(\Sigma, \leq)$-homomorphism $ h :\mathcal{B}\rightarrow \mathbb{P}(\mathcal{A})$. We then define $ h' :\mathbb{A}(\mathcal{B})\rightarrow \mathcal{A}$ by $ h' (b)=a$ for an atom $b$ in $\mathcal{B}$, where $ h (b)=\{a\}$. It is well-defined since a $(\Sigma, \leq)$-homomorphism takes atoms to atoms, and the atoms of $\mathbb{P}(\mathcal{A})$ are exactly the singletons.

We must show that $ h' $ is truly a homomorphism. For $\sigma\in\Sigma_{n}$ and atoms $b_{1}, \ldots  , b_{n}$ in $\mathbb{A}(\mathcal{B})$ such that $ h (b_{i})=\{a_{i}\}$ for every $i\in\{1, \ldots  , n\}$, we have that $ h (\sigma_{\mathcal{B}}(b_{1}, \ldots  , b_{n}))\subseteq\sigma_{\mathbb{P}(\mathcal{A})}( h (b_{1}), \ldots  ,  h (b_{n}))$, since $ h $ is a $(\Sigma, \leq)$-homomorphism, and therefore
\[ \{ h' (b):  b\in\sigma_{\mathbb{A}(\mathcal{B})}(b_{1}, \ldots  , b_{n})\}=\{ h' (b):  b\in A_{\sigma_{\mathcal{B}}(b_{1}, \ldots  , b_{n})}\}= \bigcup\{ h (b) : b\in A_{\sigma_{\mathcal{B}}(b_{1}, \ldots  , b_{n})}\}=\]
\[ h (\sup A_{\sigma_{\mathcal{B}}(b_{1}, \ldots  , b_{n})})= h (\sigma_{\mathcal{B}}(b_{1}, \ldots  , b_{n}))\subseteq \sigma_{\mathbb{P}(\mathcal{A})}( h (b_{1}), \ldots  ,  h (b_{n}))=\sigma_{\mathbb{P}(\mathcal{A})}(\{a_{1}\}, \ldots  , \{a_{n}\})=\]
\[ \sigma_{\mathcal{A}}(a_{1}, \ldots  , a_{n})=\sigma_{\mathcal{A}}( h' (b_{1}), \ldots  ,  h' (b_{n})).\]

Finally, we state that $\Phi_{\mathcal{B}, \mathcal{A}}( h' )= h $ since, for any element $b$ in $\mathcal{B}$, we have that
\[ \Phi_{\mathcal{B}, \mathcal{A}}( h' )(b)=\{ h' (c):  c\in A_{b}\}=\bigcup\{ h (c) : c\in A_{b}\}= h (\sup A_{b})= h (b),\]
and therefore the $\Phi_{\mathcal{B}, \mathcal{A}}$ are, indeed, bijective.

Given $\mathcal{A}$ and $\mathcal{C}$ two $\Sigma$-multialgebras, $\mathcal{B}$ and $\mathcal{D}$ two $(\Sigma, \leq)$-algebras, $ h :\mathcal{A}\rightarrow\mathcal{C}$ a homomorphism and $ h' :\mathcal{D}\rightarrow\mathcal{B}$ a $(\Sigma, \leq)$-homomorphism, we must now only prove that the following diagram commutes.

\[ \xymatrix{Hom_{\textbf{MAlg}(\Sigma)}(\mathbb{A}(\mathcal{B}), \mathcal{A}) \ar[rr]^{\Phi_{\mathcal{B}, \mathcal{A}}} \ar[dd]^{Hom(\mathbb{A}( h' ),  h )} & & Hom_{\textbf{Alg}_{\mathsf{B}}(\Sigma)}(\mathcal{B}, \mathbb{P}(\mathcal{A})) \ar[dd]^{Hom( h' , \mathbb{P}( h ))} \\%
& & \\
Hom_{\textbf{MAlg}(\Sigma)}(\mathbb{A}(\mathcal{D}), \mathcal{C}) \ar[rr]^{\Phi_{\mathcal{D}, \mathcal{C}}} & & Hom_{\textbf{Alg}_{\mathsf{B}}(\Sigma)}(\mathcal{D}, \mathbb{P}(\mathcal{C}))
}\]

So, we take a homomorphism $g:\mathbb{A}(\mathcal{B})\rightarrow\mathcal{A}$ and an element $d$ of $\mathcal{D}$. We have that
\[ Hom( h' , \mathbb{P}( h ))(\Phi_{\mathcal{B}, \mathcal{A}}(g))=\mathbb{P}( h )\circ\Phi_{\mathcal{B}, \mathcal{A}}(g)\circ h' ,\]
and therefore the right side of the diagram gives us 
\[ \mathbb{P}( h )\circ\Phi_{\mathcal{B}, \mathcal{A}}(g)\circ h' (d)=\mathbb{P}( h )(\{g(b):  b\in A_{ h' (d)}\})=\{ h \circ g(b):  b\in A_{ h' (d)}\}.\]

The left side gives us 
\[ \Phi_{\mathcal{D}, \mathcal{C}}( h \circ g\circ \mathbb{A}( h' ))(d)=\{ h \circ g\circ \mathbb{A}( h' )(e):  e\in A_{d}\}=\{ h \circ g\circ h' (e):  e\in A_{d}\}.\]

If $d$ is an atom, the right side becomes the singleton containing only $ h \circ g\circ h' (d)$, since in this case $A_{d}=\{d\}$ and, given that $ h' $ preserves atoms, $A_{ h' (d)}=\{ h' (d)\}$. The left side becomes also the singleton formed by $ h \circ g\circ h' (d)$, because again $A_{d}=\{d\}$. As a $(\Sigma, \leq)$-homomorphism is determined by its action on atoms, we find that the left and right sides of the diagram are equal, and therefore the diagram commutes.

As observed before, this proves $\mathbb{A}$ and $\mathbb{P}$ are adjoint and, therefore, that $\textbf{MAlg}(\Sigma)$ and $\textbf{Alg}_{\mathsf{B}}(\Sigma)$ are equivalent.

\section{Some consequences and related results}

The result that $\textbf{MAlg}(\Sigma)$ and $\textbf{Alg}_{\mathsf{B}}(\Sigma)$ are equivalent has a few consequences, and related results, we would like to stress. First of all, we start by taking the empty signature: in that case, given all multialgebras are non-empty, $\textbf{MAlg}(\Sigma)$ becomes the category of non-empty sets $\textbf{Set}^{*}$, with functions between them as morphisms. 

Meanwhile, $\textbf{Alg}_{\mathsf{B}}(\Sigma)$ becomes the category with complete, atomic and bottomless Boolean algebras as objects (given we simply drop the operations from a $(\Sigma, \leq)$-algebra), with continuous, atoms-preserving functions between them as morphisms. Notice this is very closely related to the equivalence between $\textbf{CABA}$ and $\textbf{Set}^{op}$: the morphisms on the former are merely continuous functions, so the only extra requirement to the morphisms we are making is that they should preserve atoms. This, of course, allows one to exchange the opposite category of $\textbf{Set}$ by $\textbf{Set}$ itself (or rather $\textbf{Set}^{*}$).

A generalization of our result is to partial multialgebras. That is, pairs $\mathcal{A}=(A, \{\sigma_{\mathcal{A}}\}_{\sigma\in\Sigma})$ such that, if $\sigma\in\Sigma_{n}$, $\sigma_{\mathcal{A}}$ is a function from $A^{n}$ to $\mathcal{P}(A)$ (no longer $\mathcal{P}(A)\setminus\{\emptyset\}$). In other words, a partial multialgebra is a multialgebra where operations may return the empty-set. Given partial $\Sigma$-multialgebras $\mathcal{A}$ and $\mathcal{B}$, a homomorphism between them is a function $ h :A\rightarrow B$ such that, as is the case for homomorphisms for multialgebras,
\[ \{ h (a): a\in\sigma_{\mathcal{A}}(a_{1}, \ldots  , a_{n})\}\subseteq \sigma_{\mathcal{B}}( h (a_{1}), \ldots  ,  h (a_{n})),\]
for $\sigma\in\Sigma_{n}$ and $a_{1}, \ldots  , a_{n}\in A$. The class of all partial $\Sigma$-multialgebras, with these homomorphisms between them as morphisms, becomes a category, which we shall denote by $\textbf{PMAlg}(\Sigma)$.

It is easy to find an equivalence, much alike the one between $\textbf{MAlg}(\Sigma)$ and $\textbf{Alg}_{\mathsf{B}}(\Sigma)$, between $\textbf{PMAlg}(\Sigma)$ and a category related to $\textbf{Alg}_{\mathsf{B}}(\Sigma)$: it is sufficient to replace the requirement that, in a $(\Sigma, \leq)$-algebra, $(A, \leq_{\mathcal{A}})$ is a complete, atomic and bottomless Boolean algebra, by the requisite that it is actually a complete, atomic Boolean algebra, and accordingly, change the morphisms in the correspondent category by requiring they preserve the supremum of any sets, not necessarily non-empty.

Finally, consider a modified notion of homomorphism between $\Sigma$-multialgebras $\mathcal{A}$ and $\mathcal{B}$, that of a function $ h :A\rightarrow\mathcal{P}(B)\setminus\{\emptyset\}$ such that
\[ \bigcup_{a\in \sigma_{\mathcal{A}}(a_{1}, \ldots  , a_{n})} h (a)\subseteq \bigcup_{(b_{1}, \ldots  , b_{n})\in  h (a_{1})\times\cdots\times h (a_{n})}\sigma_{\mathcal{B}}(b_{1}, \ldots  , b_{n}).\]
The category with $\Sigma$-multialgebras as objects and these homomorphisms as morphisms will be denoted by\\ $\textbf{MMAlg}(\Sigma)$. If, in the category $\textbf{Alg}_{\mathsf{B}}(\Sigma)$, we change morphisms by not longer demanding that they map atoms into atoms, it is easy to adapt the proof given in Section~\ref{sect-equiv} to show that the resulting category is equivalent to $\textbf{MMAlg}(\Sigma)$.

\section*{Conclusion and Future Work}

As we explained before, the main results here presented are a generalization of the equivalence between the categories of complete, atomic Boolean algebras and $\textbf{Set}^{op}$. On the one hand, we add operations to Boolean algebras that are compatible with its order, while on the other we allow for non-deterministic operations taking us to a category of multialgebras.

Although not specially complicated, this result is useful as it allows to treat non-deterministic matrices (Nmatrices) as, not precisely algebraic semantics, but mixed methods that combine both an algebraic component and one relative to its order. This may seem to increase the complexity of decision methods, but this sacrifice is made precisely to avoid non-determinism and use merely classical concepts. This is made, not because we distrust the use of multialgebras as semantics for non-classical logics, but as an alternative to those logicians that have philosophical objections against that very use.

More pragmatically, we are encouraged to further study the categories of multialgebras, now from the viewpoint of categories of partially ordered sets, far better understood than the former ones; moreover, we can now recast several non-deterministic characterizations of logics found in the literature in the terms here presented. Specifically, there are several paraconsistent logics uncharacterizable by finite matrices, but characterized by finite Nmatrices, which can now have semantics presented only in classical terms of algebras and orders.

\paragraph{Acknowledgements}
The first author acknowledges financial support from the National Council for Scientific and Technological Development (CNPq), Brazil, under research grant 306530/2019-8. The second author was initially supported by a doctoral scholarship from the {\em Coordenação de Aperfei\c{c}oamento de Pessoal de N\'ivel Superior -- Brasil (CAPES)} -- Finance Code 001 (Brazil), and later on by a postdoctoral fellowship by NSF-BSF, grant number 2020704.

\end{document}